\numberwithin{equation}{section}
\newtheorem{theorem}{Theorem}[section]
\newtheorem*{theorem*}{Theorem}
\newtheorem{corollary}[theorem]{Corollary}
\newtheorem{proposition}[theorem]{Proposition}
\newtheorem*{proposition*}{Proposition}
\theoremstyle{definition}
\newtheorem{definition}[theorem]{Definition}
\newtheorem{example}[theorem]{Example}
\newtheorem{remark}[theorem]{Remark}
\numberwithin{equation}{section}
\DeclareMathOperator{\Cliff}{Cliff}
\DeclareMathOperator{\supp}{supp}
\newcommand{\R}{\mathbb{R}}
\newcommand{\N}{\mathbb{N}}
\newcommand{\psidom}{\Psi^m(\mathbb{T}M, \mathbb{S})}
\newcommand{\sheafS}{\boldsymbol{{S}}}
\begin{document}

\title{Asymptotic pseudodifferential calculus and the rescaled bundle}

\author{Xiaoman Chen and Zelin Yi}

\date{}

\maketitle

\begin{abstract} 
By following a groupoid approach to pseudodifferential calculus developed by Van erp and Yuncken, we study the parallel theory on the rescaled bundle and show that the rescaled bundle gives a geometric characterization to asymptotic pseudodifferential calculus on spinor bundles by Block and Fox.
\end{abstract}


\section{Introduction}

Pseudodifferential operators have pseudo-local property, that is, their Schwartz kernels are smooth outside the diagonal. As a consequence, any pseudodifferential operator can be decomposed as a properly supported pseudodifferential operator plus a smoothing operator.
Ordinary principal symbol calculus, which is captured by the construction of tangent groupoid,  focus on the quotient space $\Psi^m(M)/\Psi^{-\infty}(M)$ where one cannot tell the difference between smoothing operator and the zero operator. However, as far as local index theory is concerned, it is precisely smoothing operators that carry  index information. More precisely, according to the Mckean-Singer formula\cite{McKeanSinger67}, the topological index can be calculated by supertrace of the smoothing operator given by the heat kernel of the Dirac operator. Therefore one need a full symbol calculus to explore the space of smoothing operators. Along this line, Widom have developed an asymptotic symbolic calculus where a smooth family of full symbols $a(x,\xi,t)$ of order $m$ is studied from the view-point of asymptotic expansion
\begin{equation}\label{eq-asym-expansion-intro}
a(x,\xi,t)\sim a_0(x,\xi)+ta_1(x,\xi)+t^2a_2(x,\xi)+\cdots.
\end{equation}
where $a_i(x,\xi)$ is a symbol of order $m-i$  and the asymptotic expansion means for all $N$, the quantity
\[
t^{-N}\left(a(x,\xi,t)-\sum_{k=0}^{N-1} t^ka_k(x,\xi)\right)
\]
converges to zero in the space of symbols of order $m-N$ as $t\to 0$.
It has the advantage that its calculus is easier than that of full symbols and, at the same time, useful aspect of smoothing operators, for example the asymptotic expansion of the operator trace at $t=0$, are preserved.

To adept this idea into the realm of local index theory, Block and Fox\cite{BlockFox90} developed the asymptotic pseudodifferential calculus on spinor bundles over compact spin manifold and use it, together with the JLO formula, to calculate Connes' cyclic cocycle.
In this paper, we recover the calculus from a geometric point-view by using a groupoid approach to pseudodifferential calculus by Van erp and Yuncken\cite{VanErpYuncken15} in the context of rescaled bundle\cite{HigsonYi19}.

Let $M$ be an even dimensional spin manifold with spinor bundle $S\to M$. The rescaled bundle $\mathbb{S}$  is a vector bundle over the tangent groupoid $\mathbb{T}M$
\begin{equation}\label{diag-bundle}
\xymatrix{
	\pi^\ast \wedge T^\ast M \ar[d] & & S^\ast \boxtimes S \ar[d] \\
	TM\times\{0\} & \sqcup & M\times M\times \mathbb{R}^\ast.
}
\end{equation}
whose restriction to $TM\times\{0\}$ is the pullback of the bundle of exterior algebras and restriction to $M\times M\times \mathbb{R}^\ast$ is the pullback of the tensor product $S^\ast \boxtimes S$. Moreover, there is an open neighborhood $\mathbb{U}$ of $TM\times \{0\}$ inside the tangent groupoid which is homeomorphic to an open neighborhood $\widetilde{\mathbb{U}}$ of $TM\times \{0\}$ inside $TM\times \mathbb{R}$. Let $\rho: TM\times \mathbb{R}\to M$ be the bundle projection, there is a natural isomorphism between the restricted rescaled bundle $\mathbb{S}|_\mathbb{U}$ and the bundle of exterior algebra $\rho^\ast \wedge T^\ast M|_{\widetilde{\mathbb{U}}}$. This fact together with the smooth structure of $S^\ast \boxtimes S$ over $M\times M\times \mathbb{R}^\ast$ completely determine the smooth structure of the rescaled bundle. 

The rescaled bundle also carries a multiplicative structure which is given by the smoothly varying maps 
\begin{equation}\label{eq-multi-map-rescaled-bundle}
\mathbb{S}_\gamma\otimes \mathbb{S}_\eta\to \mathbb{S}_{\gamma\circ \eta}
\end{equation}
where $(\gamma,\eta)$ is a composable pair of  elements in the tangent groupoid and $\gamma\circ \eta$ is their groupoid multiplication. When $\gamma=(x,X,0)$ and $\eta=(x,Y,0)$ come from $TM\times \{0\}$ part of the tangent groupoid, the multiplication map \eqref{eq-multi-map-rescaled-bundle} is explicitly computable as 
\begin{equation}\label{eq-multi-rescaled-at-t0}
\wedge T^\ast_{x} M\otimes \wedge T^\ast_{x} M \ni \alpha\otimes \beta \mapsto \exp\left(-\frac{1}{2}\kappa(X,Y)\right)\wedge \alpha\wedge\beta
\end{equation}
where $\kappa(X,Y)$ is a differential 2-form given by the symbol of curvature of the spinor bundle.

Following \cite{LescureManchonVassout17} and \cite{VanErpYuncken15}, the space of properly supported $r$-fibered distribution on the rescaled bundle $\mathcal{E}^\prime_r(\mathbb{T}M, \mathbb{S})$ is defined to be the set of continuous  $C^\infty(M\times \mathbb{R})$-module maps
$
C^\infty(\mathbb{T}M, \mathbb{S})\to C^\infty(M\times \mathbb{R})
$
and we shall consider the subspace $\psidom$ that consists of $\mathbb{P}\in \mathcal{E}^\prime_r(\mathbb{T}M, \mathbb{S})$ that satisfy the essentially homogeneous condition
\begin{equation}\label{eq-essentially-homogeneous-condition}
	\alpha_{\lambda,\ast}\mathbb{P}-\lambda^m\mathbb{P}\in C_p^\infty(\mathbb{T}M, \mathbb{S})
\end{equation}
where $\alpha_\lambda$ is a smooth action on the tangent groupoid that sends $(x,y,t)$ to $(x,y,\lambda t)$ for $t\neq 0$ and $(x,Y,0)$ to $(x,\lambda^{-1}Y, 0)$. In this paper, we shall focus on the Taylor's expansion of $\mathbb{P}\in \psidom$ at $t=0$. According to the smooth structure of the rescaled bundle, a distribution $\mathbb{P}\in \psidom$ may be restricted to an open neighborhood of $TM\times \{0\}$ and full symbol of $\mathbb{P}$ is defined by its Fourier transformation. 

Although the full symbol contains more information than the principal symbol, it is difficult to do calculation with (for example, its composition formula is complicated).
In order to preserve useful aspects of smoothing operator, instead of  the space of full symbols we shall study the space of Taylor's expansion at $t=0$
\begin{equation}\label{eq-asym-expan-distri-trinto}
	a(x,\xi,t)\sim a(x,\xi,0)+t\partial_ta(x,\xi,0)+\frac{t^2}{2}\partial^2_ta(x,\xi,0)+\cdots
\end{equation}
It has two advantages
\begin{itemize}
	\item 
	the multiplicative structure \eqref{eq-multi-map-rescaled-bundle} induces an algebra structure on $\psidom$ which in turn induces an algebra structure on the space of Taylor's expansion of full symbols. The crucial point is that the multiplication  formula of Taylor's expansion is explicitly computable and a lot easier than that of full symbols;
	\item
	If $a(x,\xi,t)$ is a full symbol of order less than or equal to $-n$, than the supertrace of the corresponding pseudodifferential kernel $\mathbb{P}$ has asymptotic expansion
	\[
	\operatorname{Str}(\mathbb{P} )\sim \sum_i t^i \cdot\left(\frac{2}{i}\right)^{n/2}(2\pi)^{-n} \int_{T^\ast M} a_i(x,\xi)d\xi dx
	\]
\end{itemize}
However, this space is restrictive in two ways:
\begin{itemize}
	\item 
	it represents only properly supported distributions;
	\item
	all $a_i(x,\xi)$'s in the expansion \eqref{eq-asym-expan-distri-trinto} are homogeneous modulo Schwartz functions.
\end{itemize}
There is an important class of pseudodifferential operators: the heat kernel that falls out of this category. We shall enlarge the space  by looking at the space of symbols $S^m$ which is defined to be the subspace of $C^\infty(T^\ast M\times \mathbb{R}, \rho^\ast \wedge T^\ast M)$ satisfy the classical  symbol estimate
\begin{equation}\label{eq-classical-symbol-estimate}
\left|\partial_x^\alpha\partial_\xi^\beta\partial_t^\gamma a(x,\xi,t)\right| \leq C\cdot (1+|\xi|+t)^{m-|\beta|},
\end{equation}
for all $\alpha, \beta,\gamma$, here $\rho: T^\ast M\times \mathbb{R}\to M$ is the bundle projection.
The space of Taylor's expansion associate with this extended symbol space retains the two advantages. Moreover, the heat kernel, which can be expressed as the fundamental solution to the differential equation
\[
\frac{\partial }{\partial \tau} + t^2D^2=0,
\]
is captured by this class of symbols. By first passing to distributions level, second to the full symbol level and third to the Taylor's expansion level of the differential equation, combine with the fact that multiplication formula of Taylor's expansion is explicitly computable, the differential equations are simplified and the solutions are explicitly computable. Under this light, the asymptotic expansion of the  heat kernel $e^{-\tau t^2D^2}$ can be calculated and the Mckean-Singer formula guarantee that the supertrace of the leading term is precisely the topological index. As a corollary, we obtain that the heat kernel $e^{-\tau t^2D^2}$ forms a smooth section of the rescaled bundle.

This paper is organized as follows: in section~\ref{sec-dis-with-coeff} we review some basic facts of the theory of distributions on manifolds with coefficient in vector bundles and their Fourier transformations; in section~\ref{sec-rescaled-bundle} we summarize the construction of the rescaled bundle over tangent groupoid for closed spin manifold; in section~\ref{sec-fibered-distirbution} we recall the theory of fibered distributions developed in \cite{LescureManchonVassout17} and, as in \cite{VanErpYuncken15}, use it to define a class of pseudodifferential operators in section~\ref{sec-psido}; in section~\ref{sec-symbols}, we study the space of Taylor's expansion of full symbols of pseudodifferential operators and show that this space has an explicitly computable multiplication formula which can be applied to heat equation and gives the asymptotic expansion of the heat kernel in section~\ref{sec-index}.

\section{Distributions with coefficient in vector bundle}\label{sec-dis-with-coeff}

Let $M$ be a closed Riemannian manifold, $E\to M$ a vector bundle over $M$ and $E^{\ast}$ the dual vector bundle. The space of distributions with coefficient in $E$ denoted by $\mathcal{D}^\prime(M, E)$ is defined to be the continuous dual of $C_c^\infty(M, E^\ast)$. Notice that we have choose a Riemannian structure on $M$ so that the density is omitted in the discussion of distributions. Let $\{U_i\}$ be an open cover of $M$ that consists of finitely many open sets and $E$ is trivial over each member, denote by $\varphi_i: E|_{U_i}\to U_i\times \mathbb{R}^n$ the trivialization and $\rho_i$ the subordinate partition of unity. For any $u_i\in \mathcal{D}^\prime(M, E)$ that has supported within some $U_i$, we have $u_i\in \mathcal{D}^\prime(U_i)\otimes \mathbb{R}^n$. In another word, $u_i$ can be written as $\sum_{I=1}^n u_{i,I}\otimes  e_{i,I}$ where $u_{i,I}$ are distributions on $U_i$ and $e_{i,I}$ are basis of $\mathbb{R}^n$.

For general $u \in \mathcal{D}^\prime(M, E)$, we have the following decomposition:
\begin{equation}\label{eq-decompose-of-distributions}
u=\sum_i \rho_i u = \sum_i  \sum_{I=1}^n \rho_i u_{i,I} \otimes e_{i,I}.
\end{equation}
Let $u\in \mathcal{D}^\prime(M)$ be a distribution on manifold $M$ and $\varphi\in C^\infty(M, E)$ be a smooth section of $E$, the product $u\cdot \varphi \in \mathcal{D}^\prime(M, E)$ is defined so that 
	\begin{equation}\label{eq-distributions-times-sections}
	\langle u\cdot \varphi, s\rangle = \langle u, \langle \varphi, s \rangle \rangle
	\end{equation}
	for all $s\in C^\infty_c(M, E^\ast)$. Here $\langle \varphi, s \rangle$ denote the compactly supported function given by $\langle \varphi, s \rangle(m) = \langle \varphi(m), s(m) \rangle_{E, E^\ast}$. Moreover, it is straightforward to check that for any $f\in C^\infty(M)$, we have $uf\cdot \varphi= u\cdot f\varphi$. Under the light of \eqref{eq-distributions-times-sections}, the equation~\eqref{eq-decompose-of-distributions} becomes $u=\sum_{i,I} \rho_iu_{i,I}\cdot e_{i,I}$. Therefore, we have  the following proposition.

\begin{proposition}\label{prop-decomposition-of-distributions}
	There is a finite set of sections $s_i$ of $E\to M$ such that any distribution $u\in \mathcal{D}^\prime(M,E)$ can be written as 
	\[
	u=\sum u_i\cdot s_i
	\]
	where $u_i\in \mathcal{D}^\prime(M)$ are some scalar distributions on $M$. \qed
\end{proposition}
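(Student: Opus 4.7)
The plan is essentially to bundle together the decomposition developed in the paragraph preceding the proposition and identify the finite collection of sections $s_i$ explicitly. Fix the finite open cover $\{U_j\}$ of $M$ trivializing $E$, with trivializations $\varphi_j:E|_{U_j}\to U_j\times\mathbb{R}^n$, local basis sections $e_{j,I}$ for $I=1,\dots,n$, and subordinate partition of unity $\{\rho_j\}$. Since $\operatorname{supp}(\rho_j)\subset U_j$, the product $\rho_j e_{j,I}$ extends by zero to a smooth global section $s_{j,I}$ of $E\to M$; this finite collection $\{s_{j,I}\}$ will serve as the required set of sections.

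Starting from $u\in\mathcal{D}'(M,E)$, I first use the partition of unity to write $u=\sum_j \rho_j u$. Each summand $\rho_j u$ has support inside $U_j$, so by the observation quoted just before \eqref{eq-decompose-of-distributions} the trivialization $\varphi_j$ yields $\rho_j u=\sum_{I} u_{j,I}\otimes e_{j,I}$ in $\mathcal{D}'(U_j)\otimes\mathbb{R}^n$, where each scalar distribution $u_{j,I}\in\mathcal{D}'(U_j)$ is supported in $\operatorname{supp}(\rho_j)$ and therefore extends by zero to an element of $\mathcal{D}'(M)$, which I continue to denote $u_{j,I}$.

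Finally, applying the identity $uf\cdot\varphi = u\cdot f\varphi$ recorded just after \eqref{eq-distributions-times-sections}, I rewrite each local summand as $\rho_j u_{j,I}\cdot e_{j,I} = u_{j,I}\cdot(\rho_j e_{j,I}) = u_{j,I}\cdot s_{j,I}$. Summing over $j$ and $I$ gives
\[
u=\sum_{j,I} u_{j,I}\cdot s_{j,I},
\]
as required. The one mild point to check is that interpreting $u_{j,I}\cdot s_{j,I}$ globally via \eqref{eq-distributions-times-sections} agrees with the local product on $U_j$ and vanishes on its complement; this is automatic because $s_{j,I}$ is supported in $U_j$ and equals $\rho_j e_{j,I}$ there. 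There is no substantive obstacle: the argument is pure bookkeeping, relying only on the finiteness of the cover and of the rank of $E$.
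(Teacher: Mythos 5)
Your overall strategy is the same as the paper's: finite trivializing cover, subordinate partition of unity, local scalar coefficients, then repack using the identity $uf\cdot\varphi = u\cdot f\varphi$. Your instinct to make the sections $s_{j,I}$ explicitly \emph{global}, rather than leaving them as local frame fields $e_{j,I}$ as the paper's displayed equation implicitly does, is a good one and closes a gap the paper glosses over.

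However there is a double-counting of $\rho_j$ in your bookkeeping. You define $u_{j,I}$ as the scalar coefficients of $\rho_j u$, i.e. $\rho_j u = \sum_I u_{j,I}\otimes e_{j,I}$, so the factor $\rho_j$ is already absorbed into $u_{j,I}$. But then the "local summand" you manipulate is $\rho_j u_{j,I}\cdot e_{j,I}$, which contains a second $\rho_j$. Concretely, with $s_{j,I}=\rho_j e_{j,I}$ and $u_{j,I}$ as you defined them, one gets $\sum_{j,I} u_{j,I}\cdot s_{j,I} = \sum_j \rho_j^2 u \neq u$. The fix is to decouple the cut-off used to globalize $e_{j,I}$ from the partition of unity used to split $u$: choose $\chi_j\in C_c^\infty(U_j)$ with $\chi_j\equiv 1$ on $\operatorname{supp}\rho_j$ and set $s_{j,I}=\chi_j e_{j,I}$. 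Then $u_{j,I}\cdot s_{j,I}=u_{j,I}\cdot e_{j,I}$ because $\chi_j\equiv 1$ on $\operatorname{supp}u_{j,I}$, and $\sum_{j,I}u_{j,I}\cdot s_{j,I}=\sum_j\rho_j u = u$ as desired. (Alternatively, keep $u_{j,I}$ as the coefficients of $u$ over $U_j$ — in which case they are \emph{not} supported in $\operatorname{supp}\rho_j$ and do not extend by zero — and take the scalar distributions to be $\rho_j u_{j,I}$ and the sections to be $\chi_j e_{j,I}$; either way an ancillary cut-off $\chi_j$ equal to $1$ near $\operatorname{supp}\rho_j$ is needed, and $\rho_j$ itself cannot play both roles.)
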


Let $\pi: TM\to M$ be the tangent bundle of $M$, 
the following result is parallel to proposition~\ref{prop-decomposition-of-distributions}.

\begin{proposition}\label{prop-decomposition-distribution-tangent-bundle}
	There is a finite set of sections $s_i$ of $E\to M$ such that any distribution $u\in \mathcal{D}^\prime(TM, \pi^\ast E)$ can be written as
	\[
	u= \sum u_i \cdot \pi^\ast s_i
	\]
	where $u_i\in \mathcal{D}^\prime(TM)$ are some scalar distributions on $TM$ and $\pi^\ast s_i$ are sections of $\pi^\ast E \to TM$ given by $\pi^\ast s_i(X_m)=s_i(m)$ for all $X_m\in T_mM$. Moreover, if $u$ is a tempered distribution (distribution with compact support respectively), then $u_i$'s can be chosen to be tempered distributions (distributions with compact support respectively) on the tangent bundle.
\end{proposition}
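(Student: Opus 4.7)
The plan is to transplant the argument of Proposition~\ref{prop-decomposition-of-distributions} from $M$ to $TM$ using pullbacks along the bundle projection $\pi$. Pick a finite open cover $\{U_i\}$ of $M$ over which $E$ trivializes, with local frames $e_{i,1},\dots,e_{i,n}$ of $E|_{U_i}$ and a subordinate partition of unity $\{\rho_i\}$. Then $\{\pi^{-1}(U_i)\}$ is a finite open cover of $TM$ over which $\pi^\ast E$ trivializes with frames $\{\pi^\ast e_{i,I}\}$, and $\{\pi^\ast\rho_i\}$ is a smooth partition of unity on $TM$ that is constant along the fibers of $\pi$.

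Given $u\in\mathcal{D}^\prime(TM,\pi^\ast E)$, I decompose $u=\sum_i(\pi^\ast\rho_i)u$. Each summand has support in $\pi^{-1}(\supp\rho_i)\subset\pi^{-1}(U_i)$, so on $\pi^{-1}(U_i)$ the trivialization yields a local expansion $(\pi^\ast\rho_i)u = \sum_I u_{i,I}\otimes\pi^\ast e_{i,I}$ whose scalar components $u_{i,I}$ have support in $\pi^{-1}(\supp\rho_i)$ and hence extend by zero to scalar distributions on all of $TM$. I then choose auxiliary cutoffs $\chi_i\in C^\infty_c(U_i)$ with $\chi_i\equiv 1$ on $\supp\rho_i$ and set $s_{i,I}:=\chi_i e_{i,I}$, obtaining a finite family of global sections of $E\to M$ depending only on the trivializing data (not on $u$). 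Applying the identity $uf\cdot\varphi = u\cdot f\varphi$ from \eqref{eq-distributions-times-sections} to $f=\pi^\ast\chi_i$ and $\varphi=\pi^\ast e_{i,I}$ gives
\[
u_{i,I}\cdot\pi^\ast s_{i,I} \;=\; u_{i,I}\cdot\bigl((\pi^\ast\chi_i)(\pi^\ast e_{i,I})\bigr) \;=\; (u_{i,I}\,\pi^\ast\chi_i)\cdot\pi^\ast e_{i,I} \;=\; u_{i,I}\cdot\pi^\ast e_{i,I},
\]
the last equality because $\pi^\ast\chi_i\equiv 1$ on $\supp u_{i,I}$. Summing over $i$ and $I$ produces the required decomposition $u = \sum_{i,I} u_{i,I}\cdot \pi^\ast s_{i,I}$.

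For the moreover clause, the sections $s_{i,I}$ are smooth on the compact base $M$, so $\pi^\ast s_{i,I}$ and the scalar function $\pi^\ast\rho_i$ are smooth on $TM$, constant along fibers, and bounded with all derivatives bounded. Multiplication by such functions preserves temperedness, and it trivially preserves compactness of support, so the scalar components $u_{i,I}$ inherit whichever regularity $u$ has. The only point of genuine care in the argument is verifying that the locally defined $u_{i,I}$ extend by zero to distributions on $TM$: this is exactly what the factor $\pi^\ast\rho_i$ provides, since it confines the support to the closed subset $\pi^{-1}(\supp\rho_i)$ of $TM$ rather than leaving it as a merely closed subset of the open set $\pi^{-1}(U_i)$. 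Beyond this, the proof is a direct transcription of the argument for Proposition~\ref{prop-decomposition-of-distributions}.
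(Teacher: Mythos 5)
Your proof is correct and follows essentially the same strategy as the paper's (which is stated very tersely): reduce by a partition of unity to distributions supported in a trivializing chart, decompose there into scalar components, and observe that the regularity class (tempered, compactly supported) is preserved. Your additional care in globalizing the local frame sections $e_{i,I}$ via the cutoffs $\chi_i$, and in noting that $\pi^\ast\rho_i$ and $\pi^\ast\chi_i$ are bounded with all derivatives bounded so that multiplication preserves temperedness, makes explicit points the paper leaves implicit; the underlying idea is identical.
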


\begin{proof}
	If $u$ is a tempered distribution with support within coordinate chart $U_i$, then $u\in \mathcal{S}^\prime(U_i\times \mathbb{R}^n) \otimes \mathbb{R}^n$. A choice of partition of unity gives the same decomposition as \eqref{eq-decompose-of-distributions} with coefficients $u_{i,I}$ scalar tempered distributions. The same argument works when $u$ is a distribution or a distribution with compact support.
\end{proof}

In this paper, we shall have occasion to consider the Fourier transformation of distributions with coefficient in vector bundles. 
 Let $u\in \mathcal{S}^\prime(TM, \pi^\ast E)$ be a tempered distribution with coefficient in $E$, its Fourier transformation, which is denote by $\widehat{u}\in \mathcal{S}^\prime(T^\ast M , \pi^\ast E)$, is defined, according to \cite[Chapter VII]{Hormanderbook1}, to satisfy
 \[
 \langle \widehat{u}, \varphi\rangle  = \langle u, \widehat{\varphi}\rangle 
 \]
 where $\varphi\in \mathcal{S}(T^\ast M, \pi^\ast E^\ast)$ is any Schwartz section and $\widehat{\varphi}\in \mathcal{S}(T M, \pi^\ast E^\ast)$ its Fourier transformation.

\begin{proposition}\label{prop-fourier-transformation-of-distribution-bundle}
	Under the light of Proposition~\ref{prop-decomposition-distribution-tangent-bundle}, the Fourier transformation of tempered distributions $\widehat{u}$ can be written as
	\[
	\widehat{u} = \sum_i \widehat{u}_i\cdot \pi^\ast s_i.
	\]
	where $\widehat{u}_i$ is the Fourier transformation of the scalar tempered distribution $u_i$. Notice here we abuse the notation $\pi^\ast s_i$ to denote both the pullback of $s_i$ to the tangent bundle and the cotangent bundle.
\end{proposition}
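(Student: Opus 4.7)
The plan is to reduce the statement to a single-term identity via linearity of the Fourier transformation and then verify that the scalar distribution $u_i$ and the pullback section $\pi^\ast s_i$ interact with Fourier duality in the expected way. By Proposition~\ref{prop-decomposition-distribution-tangent-bundle} and linearity, it suffices to show that for any scalar tempered distribution $v\in \mathcal{S}^\prime(TM)$ and section $s\in C^\infty(M, E)$, we have $\widehat{v\cdot \pi^\ast s} = \widehat{v}\cdot \pi^\ast s$ as elements of $\mathcal{S}^\prime(T^\ast M, \pi^\ast E)$.

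First I would unwind the definitions: for any Schwartz section $\psi\in \mathcal{S}(T^\ast M, \pi^\ast E^\ast)$, the definition of the Fourier transform together with equation~\eqref{eq-distributions-times-sections} yield
\[
\langle \widehat{v\cdot \pi^\ast s}, \psi \rangle = \langle v\cdot \pi^\ast s, \widehat{\psi}\rangle = \langle v, \langle \pi^\ast s, \widehat{\psi}\rangle\rangle,
\]
where $\langle \pi^\ast s, \widehat{\psi}\rangle$ is the scalar Schwartz function on $TM$ defined by $X_m\mapsto \langle s(m), \widehat{\psi}(X_m)\rangle_{E,E^\ast}$.

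The crucial observation is that the Fourier transform is taken fiberwise over $M$, while $\pi^\ast s$ is constant along each fiber of $\pi$. Consequently, pairing against $\pi^\ast s$ commutes with the fiberwise Fourier transformation: unpacking $\widehat{\psi}(X_m) = \int_{T_m^\ast M} e^{-i\xi(X_m)}\psi(\xi)\,d\xi$ and pulling the fiber-constant vector $s(m)$ through the integral gives the identity
\[
\langle \pi^\ast s, \widehat{\psi}\rangle = \widehat{\langle \pi^\ast s, \psi\rangle}
\]
of scalar Schwartz functions on $TM$. Substituting this back and applying the defining property of the scalar Fourier transform, we obtain
\[
\langle v, \widehat{\langle \pi^\ast s, \psi\rangle}\rangle = \langle \widehat{v}, \langle \pi^\ast s, \psi\rangle\rangle = \langle \widehat{v}\cdot \pi^\ast s, \psi\rangle,
\]
which establishes the desired equality. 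Summing over $i$ yields the proposition.

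I do not expect any genuine technical obstacle; the content of the statement is essentially bookkeeping once Proposition~\ref{prop-decomposition-distribution-tangent-bundle} is available, and the only substantive point is the fiberwise-constant observation that lets us move $\pi^\ast s$ freely across the Fourier transformation.
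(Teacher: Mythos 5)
Your proof is correct and matches the paper's argument in all essential respects: the same reduction to the single-term identity $\widehat{v\cdot \pi^\ast s} = \widehat{v}\cdot \pi^\ast s$ via Proposition~\ref{prop-decomposition-distribution-tangent-bundle}, the same unwinding through equation~\eqref{eq-distributions-times-sections}, and the same key observation that $\langle \pi^\ast s, \widehat{\psi}\rangle = \widehat{\langle \pi^\ast s, \psi\rangle}$ because $\pi^\ast s$ is fiberwise constant.
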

\begin{proof}
	Let $\varphi$ be any Schwartz section of $\pi^\ast E^\ast \to T^\ast M$, then
	\begin{align*}
	\langle u, \widehat{\varphi} \rangle &= \langle \sum u_i \cdot \pi^\ast s_i, \widehat{\varphi} \rangle\\
	&=\sum_i\langle u_i, \langle \pi^\ast s_i, \widehat{\varphi} \rangle \rangle.
	\end{align*}
According to the definition of $\pi^\ast s_i$ and the Fourier transformation, we have 
\[
\langle \pi^\ast s_i, \widehat{\varphi}\rangle(X_m) = \langle s_i(m), \widehat{\varphi}(X_m)\rangle = \mathcal{F}\left( \langle \pi^\ast s_i, \varphi \rangle\right)(X_m)
\] 
here $\mathcal{F}(\cdot)$ denote the Fourier transformation of some Schwartz functions. The above equation continues
\[
=\sum_i \langle \widehat{u}_i, \langle\pi^\ast s_i, \varphi \rangle\rangle = \langle \sum_i \widehat{u}_i \cdot \pi^\ast s_i, \varphi\rangle
\]
which completes the proof.
\end{proof}

\section{Rescaled bundle}\label{sec-rescaled-bundle}
Let $U\subset TM$ be an open neighborhood of the zero section where the exponential map is well defined and injective. Let $\widetilde{\mathbb{U}}$ be the open neighborhood of $TM\times \{0\}\subset TM\times \mathbb{R}$ given by
\[
\widetilde{\mathbb{U}} = \{(x,Y,t)\in TM\times \mathbb{R} \mid (x,-tY)\in U\}.
\]
Then the following homeomorphism determines the smooth structure of the tangent groupoid near $t=0$.
\begin{align}
	TM\times \mathbb{R} \supset \widetilde{\mathbb{U}}&\xrightarrow{\Phi} \mathbb{U}\subset \mathbb{T}M \label{eq-smooth-tangent}
	\\
	(x,Y,t)&\mapsto (x,\exp_x(-tY),t) \nonumber \\
	(x,Y,0)&\mapsto (x,Y,0),\nonumber
\end{align}
where $\mathbb{U}\subset \mathbb{T}M$ is the image of the above map.

Let $M$ be a closed spin manifold with spinor bundle $S\to M$. The rescaled bundle $\mathbb{S} \to \mathbb{T}M$ is a vector bundle over the tangent groupoid whose restriction to $M\times M\times \mathbb{R}^\ast$ is the pullback of $S^\ast \boxtimes S$ and whose restriction to $TM$ is the pullback of the bundle of exterior algebras. This information is summarized in the  diagram \eqref{diag-bundle}.
Each part in the diagram has its own smooth structure, to specify the smooth structure of the rescaled bundle it is enough to specify how does these two parts fit together.

\begin{definition} 
	Denote by 
	$A(\mathbb{T}M)\subseteq C^\infty(M\times M)[t^{-1},t]$    the $\R$-algebra of those Laurent polynomials
	\begin{equation}\label{eq-laurent-poly}
	\sum_{p\in \mathbb{Z}} f_p t^{-p}
\end{equation}
	for which  each coefficient $f_p$  is a  smooth, real-valued  function on $M\times M$ that vanishes to order $p$ on the diagonal $M\hookrightarrow M\times M$.
\end{definition}

Laurent polynomials \eqref{eq-laurent-poly} can be evaluated at any $\gamma\in \mathbb{T}M$. The evaluation is given as follows: its value at $\gamma=(x,y,\lambda)\in M\times M\times \mathbb{R}^\ast$ is given by $\sum f_p(x,y)\lambda^{-p}$ and its value at $(m,X,0)\in TM\times \{0\}$ is given by$\sum \frac{1}{p!}X^pf_p$. Moreover, the character spectrum of the associative algebra $A$ is precisely the tangent groupoid $\mathbb{T}M$, namely for any point $\gamma\in \mathbb{T}M$ there is a maximal ideal $I_\gamma\subset A(\mathbb{T}M)$ given by the collection of elements whose evaluation is zero at $\gamma$, and this viewpoint can be used to determines the manifold structure of $\mathbb{T}M$ (see \cite{HajSaeediSadeghHigson18} or \cite{HigsonYi19}).

One may take the algebra $A(\mathbb{T}M)$ as the space of "algebraic functions" on the tangent groupoid.
In the following, we shall define an $A(\mathbb{T}M)$-module denoted by $S(\mathbb{T}M)$ which can be taken as, with the previous analogy, the space of "algebraic sections" of rescaled bundle.

\begin{definition}\label{def-module}
	Denote by  $S(\mathbb{T}M)$   the complex  vector space of   Laurent polynomials
	\[
	\sum_{p \in \mathbb{Z}} \sigma _p  t^{-p}
	\]
	where each $\sigma_p$ is a smooth section of $S^\ast \boxtimes S$ of scaling order at least $p$.   
\end{definition}

We shall refer to \cite[Definition~3.3.5]{HigsonYi19} for a precise definition and detailed discussion for the notion of scaling order. However, we shall give an important example of sections with certain scaling order.

\begin{example}\label{exam-scaling-order}
	Let $n=\dim(M)$, $V$ be an open coordinate chart of $M$ over which the tangent bundle is trivial and $e_1,e_2,\cdots, e_n$ is an orthonormal frame for $TM|_V$. For any multi-index $I=\{i_1,i_2,\cdots, i_d\}$ of length $\ell(I)=d$, denote by $e_I$ the Clifford multiplication 
	\begin{equation}
	e_{i_1}e_{i_2}\cdots e_{i_d}
	\end{equation} 
	which is a local section of the bundle of Clifford algebras $\Cliff(TM)\to M$. Taken as the restriction of $S^\ast \boxtimes S\to M\times M$ to the diagonal, we can use parallel translation in the second variable to extend the local section $e_I$ to a local section of $S^\ast \boxtimes S$. We shall use the same notation $e_I$ to denote the extension, and it has scaling order $-\ell(I)$.
	In fact, any local section $\sigma$ of $S^\ast \boxtimes S$ has the form
	\[
	\sigma = \sum_I f_Ie_I
	\]
	where $f_I$ are smooth functions on $M\times M$. If the smooth function $f_I$ vanishes to order $p+\ell(I)$ for all $I$ then $\sigma$ has scaling order $p$.
\end{example}

	It is a fact that    $S(\mathbb{T}M)$   is a module over $A(\mathbb{T}M)$ by ordinary multiplication of   Laurent polynomials. 
	The fiber of $S(\mathbb{T}M)$ over $\gamma$ is defined to be
	\begin{equation*}
		S(\mathbb{T}M)\vert _\gamma  = S(\mathbb{T}M)  \big/ I_\gamma \cdot S(\mathbb{T}M).
	\end{equation*}
There is an isomorphism between $S(\mathbb{T}M)|_{(x,y,t)}$ and $S^\ast_x\otimes S_y$ which is induced by the map
\[
\varepsilon_{(x,y,\lambda)}: \sum s_pt^{-p}\mapsto \sum s_p(x,y)\lambda^{-p}
\]
for $t\neq 0$. For $t=0$, there is a similar isomorphism between $S(\mathbb{T}M)|_{(x,Y,0)}$ and $\wedge T^\ast_x M$ which is slightly more complicated and we refer the reader to \cite[Proposition~3.4.9]{HigsonYi19}. According to these two isomorphisms, there is a  map 
$$
S(\mathbb{T}M) \to \prod_{\gamma\in \mathbb{T}M} S(\mathbb{T}M)|_\gamma
$$
which sends $\sigma\in S(\mathbb{T}M)$ to its image $\widehat{\sigma}$ under the corresponding isomorphism.

 \begin{definition}
	\label{def-sheaf-of-sections}
	Denote by $\sheafS_{\mathbb{T}M}$ the sheaf on $\mathbb{T}M$ consisting of sections  
	\[
	\mathbb{T}M \ni \gamma \longmapsto \tau(\gamma) \in S(\mathbb{T}M)|_\gamma
	\]
	that are locally of the form
	\[
	\tau(\gamma)  = \sum_{j=1}^N f_j(\gamma)  \cdot \widehat \sigma_j(\gamma)
	\]
	for some $N\in \N$,  where   $f_1,\dots, f_N$ are smooth functions on $\mathbb{T}M$  and   $\sigma_1,\dots, \sigma_N$  belong to $S(\mathbb{T}M)$.
\end{definition}

It can be shown that this is a locally free sheaf and is the space of smooth sections of the rescaled bundle $\mathbb{S}\to \mathbb{T}M$. According to the construction, for any element $\sigma$ of $S(\mathbb{T}M)$, the assignment $\widehat{\sigma}$ which sends $\gamma\in \mathbb{T}M$ to $\widehat{\sigma}(\gamma)$ is a smooth section of the rescaled bundle. In fact, thanks to Example~\ref{exam-scaling-order}, the assignments $\widehat{e_I t^{\ell(I)}}$, as $I$ ranges over all multi-index $I\subset \{1,2,\cdots,n\}$, form a local frame of the rescaled bundle. The following Proposition clarify the smooth structure of the rescaled bundle over $\mathbb{U}$.

\begin{proposition}\label{prop-trivialization-of-local-rescaled-bundle}
	The pullback of the rescaled bundle along $\Phi: \widetilde{\mathbb{U}}\to \mathbb{T}M$ is isomorphic to the pullback of the bundle of exterior algebras along $\rho: TM\times \mathbb{R}\supset \widetilde{\mathbb{U}}\to M$.
\end{proposition}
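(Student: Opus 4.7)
The plan is to construct the isomorphism from explicit local frames on both sides and then check that the local recipe is frame-independent, so that it globalizes. Fix a coordinate chart $V \subseteq M$ over which $TM$ admits an orthonormal frame $e_1, \dots, e_n$, and let $e^1, \dots, e^n$ denote the dual coframe. Following Example~\ref{exam-scaling-order}, for each multi-index $I \subseteq \{1, \dots, n\}$ the Clifford product $e_I = e_{i_1} \cdots e_{i_{\ell(I)}}$, extended by parallel transport in the second variable, is a local section of $S^\ast \boxtimes S$ of scaling order $-\ell(I)$. Hence $e_I t^{\ell(I)}$ belongs to $S(\mathbb{T}M)$, and the $2^n$ sections $\widehat{e_I t^{\ell(I)}}$ form a local frame of $\mathbb{S}$ over the piece of $\mathbb{T}M$ lying above $V$, as noted after Definition~\ref{def-sheaf-of-sections}. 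On the other side, the wedges $e^I = e^{i_1} \wedge \cdots \wedge e^{i_{\ell(I)}}$ give a frame of $\wedge T^\ast M|_V$ and therefore of $\rho^\ast \wedge T^\ast M$ over $TV \times \R$.

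Setting $W_V = \widetilde{\mathbb{U}} \cap (TV \times \R)$, I would then define a $C^\infty(W_V)$-linear bundle map
\[
\Psi_V : \rho^\ast \wedge T^\ast M\big|_{W_V} \longrightarrow \Phi^\ast \mathbb{S}\big|_{W_V}, \qquad \rho^\ast e^I \longmapsto \Phi^\ast \widehat{e_I t^{\ell(I)}}.
\]
Since both frames are indexed by the $2^n$ subsets of $\{1, \dots, n\}$ and both bundles have rank $2^n$, the map $\Psi_V$ is fibrewise a linear isomorphism, and smoothness in both directions is automatic from the smoothness of the frames.

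The main obstacle is showing that $\Psi_V$ does not depend on the choice of orthonormal frame, so that these local isomorphisms glue to a global isomorphism $\rho^\ast \wedge T^\ast M \cong \Phi^\ast \mathbb{S}$ over $\widetilde{\mathbb{U}}$. For this I would invoke the orthogonal-invariance of the classical symbol isomorphism $\Cliff(T_x M) \cong \wedge T^\ast_x M$: under a change of frame $e_i \mapsto \sum_j O_i^j(x) e_j$ with $O(x) \in O(n)$, the products $e_I$ and the wedges $e^I$ transform by the induced $O(n)$-representations on $\Cliff(T_x M)$ and on $\wedge T^\ast_x M$ respectively, and these representations are intertwined by the symbol map. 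At $t = 0$ this makes $\Psi_V$ coincide with the canonical identification $S(\mathbb{T}M)|_{(x, Y, 0)} \cong \wedge T^\ast_x M$ from \cite[Proposition~3.4.9]{HigsonYi19}, which is intrinsic. For $t \neq 0$ the same orthogonal-invariance applies to the fibrewise recipe $\wedge T^\ast_x M \to S^\ast_x \otimes S_{\exp_x(-tY)}$, $e^I \mapsto t^{\ell(I)} e_I(x, \exp_x(-tY))$, and so $\Psi_V$ is independent of the chosen frame. Patching across a cover of $M$ by such trivializing charts then yields the claimed global isomorphism.
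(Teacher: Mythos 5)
Your proposal is correct and constructs exactly the map the paper uses: on a frame $e^I$ the recipe $e^I \mapsto t^{\ell(I)} e_I(x,\exp_x(-tY))$ is precisely $\tau_2(x,\exp_x(-tY)) \circ q_t$, i.e.\ the paper's map $S$ composed out of the rescaled quantization map $q_t$ and parallel transport in the second variable. The only difference is presentational: the paper defines $S$ intrinsically (so frame-independence is automatic from the well-definedness of $q_t$) and then checks smoothness by computing on the local frame $\widehat{e_I t^{\ell(I)}}$, whereas you define the map on frames (so smoothness is automatic) and then check frame-independence via the $O(n)$-equivariance of the quantization map---two orderings of the same argument.
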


\begin{proof}
	Let $V$ be a finite dimensional vector space with an inner product and an orthonormal basis $e_1,e_2,\cdots, e_n$. Denote by $q_t: \wedge^\ast V\to \Cliff(V)$ the quantization map which is given by 
	\[
	q_t(e_{i_1}\wedge e_{i_2}\wedge \cdots \wedge e_{i_k}) = t^k e_{i_1}e_{i_2}\cdots e_{i_k}
	\]
	for all $t\neq 0$. Denote by $\sigma_t: \Cliff(V) \to \wedge^\ast V$ the symbol map which is the inverse of $q_t$.
	Let $S: \rho^\ast \wedge T^\ast M \to \mathbb{S}|_{\mathbb{U}}$ be the map given by
	\[
	\left(\rho^\ast \wedge T^\ast M\right)_{(x,Y,t)}   \ni \omega
	\mapsto
	\tau_2(x, \exp_x(-tY)) q_t \omega\in \mathbb{S}_{(x, \exp_x(-tY),t)} 
	\]
	when $t\neq 0$,  and
	\[
	\left(\rho^\ast \wedge T^\ast M\right)_{(x,Y,0)}   \ni \omega \mapsto \omega \in \mathbb{S}_{(x,Y,0)}.
	\]
	here we thought $q_t \omega$ as an element in $\mathbb{S}_{(x,x,t)}$ and $\tau_2(x,\exp_x(-tY))$ is the parallel translation of the bundle $S\boxtimes S^\ast$ in the second variable from $x$ to $\exp_x(-tY)$.  
	We can also define the inverse map $T: \mathbb{S}|_{\mathbb{U}} \to \rho^\ast \wedge T^\ast M$ which sends 
	\[
	\mathbb{S}_{(x,\exp_x(-tY),t)} \ni \omega
	\mapsto 
	\sigma_t \left( \tau_2(\exp_x(-tY),x)\omega\right)\in \left(\rho^\ast \wedge T^\ast M\right)_{(x,Y,t)} 
	\]
	for $t\neq 0$, and
	\[
	\mathbb{S}_{(x,Y,0)} \ni \omega
	\mapsto 
	\omega\in \left(\rho^\ast \wedge T^\ast M\right)_{(x,Y,0)},
	\]
	here $\tau_2(\exp_x(-tY),x)$ is the parallel translation of $S\boxtimes S^\ast$ in the second variable from $\exp_x(-tY)$ to $x$.  It is easy to check that these two maps are mutually inverse.
	
	It remains to show that $T$ and $S$ are smooth. Indeed, let $V\subset M$ be an open subset over which the tangent bundle $TM$ is trivial and the trivialization is given by an orthonormal frame $e_1,e_2,\cdots, e_n$. Let $I=\{i_1,i_2,\cdots, i_{\ell(I)}\}\subset \{1,2,\cdots,n\}$ be the multi-index, denote by $\wedge^I e$ the form $e_{i_1}\wedge e_{i_2}\wedge \cdots \wedge e_{i_{\ell(I)}}$ and $e_I$ the Clifford multiplication $e_{i_1}e_{i_2}\cdots  e_{i_{\ell(I)}}$. Then the bundle $\left(\rho^\ast \wedge T^\ast M\right)|_{(TV\times \mathbb{R})\cap \widetilde{\mathbb{U}}}$ is also trivial and the constant sections $\{\wedge^I e\}_I$ form a local orthonormal frame. According to the definition, $S$ sends the constant section $\wedge^I e$ to the smooth section $\widehat{e_It^{\ell(I)}}$ of $\mathbb{S}$ which implies the smoothness of $S$. The smoothness of $T$ can be verified in the same way.
\end{proof}

The rescaled bundle also has a multiplicative structure which we shall describe now.
Let $G$ be a Lie groupoid, 
$
G^{(2)}=\{(\gamma, \eta)\in G\times G \mid s(\gamma)=r(\eta)\}
$
be the set of composable pairs. Denote by $p_1,p_2: G^{(2)}\to G$ the two coordinate projections and $m: G^{(2)}\to G$ the multiplication map.

\begin{definition}
	Let $E\to G$ be a vector bundle. We shall say that $E$ has a multiplicative structure if there is a bundle map (which we shall also denote by $m$)
	\begin{equation}\label{eq-def-of-multiplicative-bundle}
		\xymatrix{
			p_1^\ast E\otimes p_2^\ast E \ar^{\quad m}[r] \ar[d] & E \ar[d]\\
			G^{(2)} \ar^{m}[r]& G
		}
	\end{equation}
	that covers the multiplication map of the groupoid $G^{(2)}\to G$. The bundle $p_1^\ast E\otimes p_2^\ast E$ is sometimes denoted by $E\boxtimes E$.
\end{definition}

The  multiplicative structure of the rescaled bundle $m: p_1^\ast\mathbb{S}\otimes p_2^\ast \mathbb{S} \to \mathbb{S}$ is given as follows:
\begin{itemize}
	\item the restriction of $m$ away from $t=0$ is the multiplication $\mathbb{S}_{(x,y,t)}\otimes \mathbb{S}_{(y,z,t)} \to \mathbb{S}_{(x,z,t)}$ given by contracting the middle two variables;
	\item the restriction of $m$ to $t=0$ is the multiplication $\mathbb{S}_{(x,Y,0)}\otimes \mathbb{S}_{(x,Z,0)} \to \mathbb{S}_{(x,Y+Z,0)}$ given by sending $\alpha\otimes\beta \in \wedge T^\ast_xM \otimes \wedge T^\ast_xM$ to $\alpha\wedge \beta \wedge \exp(-\frac{1}{2}\kappa(Y,Z))$ where $\kappa(Y,Z)$ is the differential two form given by the symbol of curvature of the spinor bundle.
\end{itemize}
(See \cite[Section~4]{HigsonYi19})

\section{Fibered distributions}\label{sec-fibered-distirbution}

Most of the statements in this section are valid for general vector bundle over Lie groupoid that has a multiplicative structure. At the end of this section, we shall specialize to the case when $G=\mathbb{T}M$ is the tangent groupoid and $E=\mathbb{S}$ is the rescaled bundle.

\begin{definition}\label{def-r-fibered-distributions}
	Let $s: M\to B$ be a submersion, a $s$-fibered distribution on $M$ with coefficient in a vector bundle $E\to M$ is a continuous $C^\infty(B)$-module map 
	\[
	u: C^\infty_c(M, E^\ast) \to C^\infty(B)
	\]
	where the $C^\infty(B)$-module structure on $C^\infty_c(M,E^\ast)$ is given by the submersion $s$. The set of all $s$-fibered distribution is denoted by $\mathcal{D}^\prime_s(M,E)$. We can also define $s$-fibered compactly supported distribution on $M$ with coefficient in $E$ which  is given by a continuous $C^\infty(B)$-module map
	\begin{equation}\label{eq-def-fibered-dis}
	u: C^\infty(M,E^\ast) \to C^\infty(B).
	\end{equation}
	The set of all such $s$-fibered distributions will be denoted by $\mathcal{E}^\prime_s(M, E)$. If $M$ is a vector bundle over $B$ and $s$ is the bundle projection, one can talk about the $s$-fibered tempered distribution with coefficient in $E\to M$ given by a continuous $C^\infty(B)$-module map
	\[
	u: \mathcal{S}(M, E^\ast) \to C^\infty(B).
	\]
	The set of all $s$-fibered tempered distributions will be denoted by $\mathcal{S}_s^\prime(M,E)$.

\end{definition}

In the following discussion, we shall concern mainly with the case when $M$ is a Lie groupoid $G$,  $B$ is the unit space $G^{(0)}$ and $s$ is the range map $r: G\to G^{(0)}$.
We shall also have occasion to consider the case when $M=G^{(2)}$ with vector bundle $E\boxtimes E\to G^{(2)}$, $B=G^{(0)}$ and $s=r\circ m$ the composition of the multiplication map with the range map.

\begin{remark}
This definition of fibered compactly supported distribution is used in \cite{VanErpYuncken15} which is slightly different from the one used in \cite{LescureManchonVassout17}  where the authors defined the compactly supported fibered distribution by the continuous $C^\infty(B)$-module maps
\begin{equation}\label{eq-def-vanerp}
u: C^\infty(M)\to C_c^\infty(B).
\end{equation}
The continuity of $u: C^\infty(M) \to C^\infty_c(B)$ implies that there is a constant $C$, a compact subset $K\subset G$ and a finite set of seminorms $p_i(f)=\sup_{\gamma\in K} \left|\partial^\alpha f (\gamma)\right|$ on $C^\infty(M)$ such that 
\[
\sup_{x\in B}\left| u(f)(x)\right| \leq C\cdot \sum p_i(f)
\]
for all $f\in C^\infty(M)$. This implies that $u$ is supported within $K$. The definition we use here, on the other hand, does not require $u$ to have overall compact support.
\end{remark}

\begin{remark}\label{rmk-support}
If $u\in \mathcal{D}^\prime_s(M,E)$ (or $\mathcal{E}^\prime_s(M,E)$ respectively), let $f\in C^\infty_c(M,E^\ast)$ (or $C^\infty(M,E^\ast)$ respectively), we shall write 
\[
\langle u, f\rangle(x) = \langle u^x, f|_{s^{-1}(x)}\rangle
\]
where $x\in B$ and $u^x$ is the induced distribution on the fiber $s^{-1}(x)$. So, we may take the $s$-fibered distribution $u$ as a family of distributions. It is easy to see that $u^x$ are all compactly supported when $u\in \mathcal{E}^\prime_s(M,E)$.

\end{remark}

Now, let $E\to G$ be a vector bundle with multiplicative structure, we shall describe the algebra structure on the set of distributions $\mathcal{E}^\prime_r(G,E)$.
The multiplicative structure \eqref{eq-def-of-multiplicative-bundle} of $E$ induces a map $m^\ast: C^\infty(G,E^\ast) \to C^\infty(G^{(2)}, E^\ast\boxtimes E^\ast)$ which is given by
$
m^\ast f(\gamma_1,\gamma_2) = m_{\gamma_1,\gamma_2}^\ast f(\gamma_1\gamma_2)
$,
here $f$ is any smooth section of $E^\ast\to G$ and $m_{\gamma_1,\gamma_2}^\ast: E^\ast_{\gamma_1\gamma_2}\to E^\ast_{\gamma_1}\otimes E^\ast_{\gamma_2}$ is the dual of the multiplication map $m: E_{\gamma_1}\otimes E_{\gamma_2}\to E_{\gamma_1\gamma_2}$. 
At the level of fibered distributions, we can define $m_\ast: \mathcal{E}^\prime_{r\circ m}(G^{(2)}, E\boxtimes E)\to \mathcal{E}^\prime_r(G,E)$ by
$
m_\ast u(f) =u(m^\ast f),
$
for any $f\in C^\infty(G,E^\ast)$ and any $u\in \mathcal{E}^\prime_{r\circ m}(G^{(2)}, E\boxtimes E)$.
Next, consider the commutative diagram 
\[
\xymatrix{
G^{(2)} \ar_{p_2}[d] \ar^{p_1}[r]& G \ar_{s}[d] \\
G \ar^{r}[r]& G^{(0)}.
}
\]
Notice that the fibers of $p_1: G^{(2)}\to G$ correspond to the range fibers of $G$.
For any $u\in \mathcal{E}^\prime_r(G,E)$, its pullback $p_1^\ast u$ by the first projection $p_1$ is defined to be the continuous $C^\infty(G)$-module map $C^\infty(G^{(2)}, E^\ast\boxtimes E^\ast)\to C^\infty(G,E^\ast)$ which is given by 
\[
\langle p_1^\ast u,  g \rangle(\gamma)= \langle u^{s(\gamma)}, g|_{(\gamma,G^{s(\gamma)})} \rangle
\]
here the $C^\infty(G)$-module structure on $C^\infty(G^{(2)}, E^\ast\boxtimes E^\ast)$ is given by $p_1$, we take $(\gamma,G^{s(\gamma)})$ as a submanifold of $G^{(2)}$ and $u^{s(\gamma)}: C^\infty(G^{s(\gamma)},E^\ast)\to \mathbb{C}$ is the distribution induced by $u$. 

\begin{remark}\label{rmk-pullback-compact}
In fact, if $u\in \mathcal{D}^\prime_r(G,E)$, the same construction $p_1^\ast u$ is a $C^\infty(G)$-module map that sends $C^\infty_c(G^{(2)}, E^\ast\boxtimes E^\ast)$ to $C^\infty_c(G,E^\ast)$.	
\end{remark}

Let $u, v\in \mathcal{E}^\prime_r(G,E)$, then the convolution multiplication $u\ast v$ is defined to be $m_\ast (u\circ \operatorname{pr_2}^\ast v)\in \mathcal{E}^\prime_r(G,E)$. Explicitly the convolution is given by
\begin{equation}\label{eq-convolution-of-dis}
\langle u\ast v, \varphi \rangle(x) = \left\langle u^x(\gamma), \left\langle v^{s(\gamma)}(\eta), m^\ast \varphi(\gamma,\eta) \right\rangle \right\rangle
\end{equation}
for $\varphi \in C^\infty(G,E^\ast)$.
In fact, slightly more is true.

\begin{corollary}\label{coro-conv-dis-d-e}
	If $u,v\in \mathcal{D}^\prime_r(G,E)$ and at least one of them belong to $\mathcal{E}^\prime_r(G,E)$ and is properly supported, then $u\ast v$ given by the formula \eqref{eq-convolution-of-dis} is still well-defined and belongs to $\mathcal{D}^\prime_r(G,E)$.
\end{corollary}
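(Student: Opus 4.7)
The plan is to verify that formula \eqref{eq-convolution-of-dis} defines a continuous $C^\infty(G^{(0)})$-linear map $C_c^\infty(G,E^\ast)\to C^\infty(G^{(0)})$ in each of the two cases, by checking sequentially that the inner pairing against $v^{s(\gamma)}$, the resulting function of $\gamma$, and the outer pairing against $u^x$ all make sense. The support hypotheses are precisely what compensate for the fact that $\mathcal{D}^\prime_r$-distributions cannot be freely paired with non-compactly-supported smooth sections.

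\emph{Case 1: $v$ is properly supported.} Then $v^{s(\gamma)}$ has compact support in $G^{s(\gamma)}$, so $w(\gamma):=\langle v^{s(\gamma)}, m^\ast \varphi(\gamma,\cdot)\rangle$ is defined for every $\varphi\in C_c^\infty(G,E^\ast)$, and the standard smooth-family argument for $r$-fibered compactly supported distributions gives $w\in C^\infty(G,E^\ast)$. A direct support chase shows
\[
\supp(w)\subset \supp(\varphi)\cdot \supp(v)^{-1}.
\]
For $\gamma\in G^x$ in the right-hand side, we may write $\gamma=\gamma_1\eta^{-1}$ with $\gamma_1\in \supp(\varphi)\cap G^x$ (a compact set) and $\eta\in \supp(v)\cap r^{-1}\bigl(s(\supp(\varphi)\cap G^x)\bigr)$, which is compact by the proper-support hypothesis on $v$. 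Hence $\supp(w)\cap G^x$ is compact and $\langle u^x, w|_{G^x}\rangle$ is well-defined because $u\in \mathcal{D}^\prime_r(G,E)$.

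\emph{Case 2: $u$ is properly supported.} Now $v^{s(\gamma)}\in \mathcal{D}^\prime(G^{s(\gamma)})$ may fail to be compactly supported, but left translation $L_\gamma:G^{s(\gamma)}\to G^{r(\gamma)}$ is a diffeomorphism of range fibers, so $m^\ast\varphi(\gamma,\cdot)=L_\gamma^\ast(\varphi|_{G^{r(\gamma)}})$ is compactly supported on $G^{s(\gamma)}$ for any $\varphi\in C_c^\infty(G,E^\ast)$. The inner pairing is therefore defined and produces a smooth function $w(\gamma)$. The outer pairing $\langle u^x, w|_{G^x}\rangle$ makes sense because $u^x\in \mathcal{E}^\prime(G^x)$ is compactly supported, regardless of the support of $w|_{G^x}$.

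In both cases, continuity and $C^\infty(G^{(0)})$-linearity of $\varphi\mapsto \langle u\ast v,\varphi\rangle$ follow directly from the corresponding properties of $u$ and $v$, so $u\ast v\in \mathcal{D}^\prime_r(G,E)$. The most subtle point, which I expect to be the main obstacle, is justifying the smoothness of $w(\gamma)$ in Case 2 when $v$ is only an $r$-fibered distribution: this must be extracted from the $C^\infty(G^{(0)})$-module structure of $v$ via a local trivialization of the range fibration, rather than being manifest from the pointwise formula.
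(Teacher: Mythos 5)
Your proof is correct and captures the same underlying principle as the paper --- a support analysis split into two cases, using the proper support of one factor plus a groupoid identity to recover compactness --- but it is packaged quite differently. You work pointwise with the explicit iterated pairing $\langle u^x(\gamma),\langle v^{s(\gamma)}(\eta),m^\ast\varphi(\gamma,\eta)\rangle\rangle$ and chase supports fiber by fiber; the paper instead stays functorial, inserting a properly supported cut-off $\varphi$ with $\varphi\cdot v=v$ (resp.\ $\varphi\cdot u=u$), and observing that $f\mapsto p_2^\ast\varphi\cdot m^\ast f$ (resp.\ $f\mapsto p_1^\ast\varphi\cdot m^\ast f$) maps $C^\infty_c(G,E^\ast)$ into $C^\infty_c(G^{(2)},E^\ast\boxtimes E^\ast)$, using $s\circ m=s\circ p_2$ (resp.\ $r\circ m=r\circ p_1$). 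Those groupoid identities are the abstract form of your concrete manipulations $s(\gamma\eta)=s(\eta)$ and $r(\gamma\eta)=r(\gamma)$, so the geometric content is identical. The principal payoff of the paper's route is exactly the point you flag as your ``main obstacle'': because the whole construction is expressed as a composition of continuous linear maps between section spaces (the extended $m_\ast$, the pullback $p_1^\ast$, and multiplication by a fixed smooth function), the smoothness and continuity of the intermediate function of $\gamma$ come for free, whereas in your pointwise presentation they require a separate local-trivialization argument that you acknowledge but do not carry out. Your version, on the other hand, has the advantage of making the compactness of $\supp(w)\cap G^x$ completely explicit via the inclusion $\supp(w)\subset\supp(\varphi)\cdot\supp(v)^{-1}$, which the paper leaves implicit in its verification that $p_2^{-1}(\supp\varphi)\cap m^{-1}(\supp f)$ is compact. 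If you want to close the remaining gap in your Case 2, the cleanest way is to do what the paper does: observe that $\gamma\mapsto m^\ast\varphi(\gamma,\cdot)$, after cutting off by a properly supported function, lands in $C_c^\infty(G^{(2)},E^\ast\boxtimes E^\ast)$, and then let the continuity of $v$ as a $C^\infty(G^{(0)})$-module map do the smoothness work for you.
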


\begin{proof}
	If $v\in \mathcal{E}_r^\prime(G,E)$ and is properly supported, $u\in  \mathcal{D}_r^\prime(G,E)$, according to the Remark~\ref{rmk-pullback-compact}, the composition $u\circ p_1^\ast v$ belongs to $\mathcal{D}^\prime_{r\circ m}(G^{(2)}, E\boxtimes E)$. As the multiplication map $m:G^{(2)}\to G$ is not necessarily proper, there is no guarantee that the map $m_\ast: \mathcal{E}^\prime_{r\circ m} (G^{(2)}, E \boxtimes E)\to \mathcal{E}^\prime_r(G,E)$ can be extended to $  \mathcal{D}^\prime_{r\circ m} (G^{(2)}, E \boxtimes E)\to \mathcal{D}^\prime_r(G,E)$. To proceed, we need to use the property of $v$ in a more serious way.
	
	According to the assumption on the support of $v$, there is a properly support function $\varphi$ with $\varphi\cdot v=v$ and $u\circ p_1^\ast v= p_2^\ast \varphi \cdot (u\circ p_1^\ast v)$. Consider the map $C^\infty(G, E^\ast)\to C^\infty(G^{(2)}, E^\ast\boxtimes E^\ast)$ which sends $f\in C^\infty(G,E^\ast)$ to $p_2^\ast \varphi\cdot m^\ast f$. If $f$ is compactly supported, then the support of $p_2^\ast \varphi\cdot m^\ast f$ is $p_2^{-1}(\supp{\varphi})\cap m^{-1}(\supp{f})$. Notice that $s\circ m = s\circ p_2$, and $\varphi$ is properly supported, it follows that $p^\ast_2 \varphi \cdot m^\ast f$ is also compactly supported. 
	
	Thus the above map takes the subspace $C^\infty_c(G,E^\ast)$ into the subspace $C_c^\infty(G^{(2)}, E^\ast\boxtimes E^\ast)$. Under this light, we have the equation
	\[
	m_\ast \left(p_2^\ast \varphi\cdot (u\circ p_1^\ast v)\right) f = \left(u\circ p_1^\ast v \right) (p_2^\ast \varphi \cdot m^\ast f)
	\]
	which proves $m_\ast(u\circ p_1^\ast v)\in \mathcal{D}^\prime_r(G,E)$.
	
	The other case, when $u\in \mathcal{E}_r^\prime(G,E)$ and is properly supported, $v\in  \mathcal{D}_r^\prime(G,E)$, can be proved in a similar way, except this time, there is a properly supported function with $u=\varphi\cdot u$ and $u\circ p^\ast_1v = p_1^\ast \varphi\cdot (u\circ p_1^\ast v)$. The fact that $r\circ m=r\circ p_1$ implies that the map $C^\infty(G,E^\ast)\to C^\infty(G^{(2)}, E^\ast \boxtimes E^\ast)$ which sends $f\in C^\infty(G,E^\ast)$ to $p_1^\ast \varphi \cdot m^\ast f\in C^\infty(G^{(2)}, E^\ast \boxtimes E^\ast)$ take the subspace of compactly supported sections to compactly supported sections. This completes the proof.
\end{proof}

Now set $G$ to be the tangent groupoid and $E$ to be the rescaled bundle.
\begin{proposition}\label{prop-multiplication-formula-at-zero-fourier}
	Let $u,v\in \mathcal{E}^\prime_r(G,E)$, write $u_0$ and $v_0$ for the restrictions $u|_{t=0}$ and $v|_{t=0}$, let $\widehat{u}_0, \widehat{v}_0$ be their Fourier transformations, then the Fourier transformation of $u_0\ast v_0$ admits the following expansion
	\begin{equation}\label{eq-multiplication-formula-at-zero-fourier}
	 \exp\left(-\frac{1}{2}\kappa(\partial_{\xi},\partial_{\eta})\right) \widehat{u}_0(\xi)\wedge\widehat{v}_0(\eta)|_{\xi=\eta}
	\end{equation}
	which coincide with the formula in \cite[Theorem~2.7]{Getzler83}. In the following discussion, we shall write $\widehat{u}_0\#_0 \widehat{v}_0$ for  \eqref{eq-multiplication-formula-at-zero-fourier}.
\end{proposition}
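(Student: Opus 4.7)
The plan is to compute both sides of the asserted identity directly in local coordinates on a fibre $T_xM$ and match them via the standard Fourier-theoretic rule ``multiplication by a polynomial in $X$ corresponds to differentiation in $\xi$''. At $t=0$ the tangent groupoid is the group bundle $TM$ (fibrewise addition), $\mathbb{S}|_{t=0}=\pi^\ast\wedge T^\ast M$, and the multiplicative structure is given by \eqref{eq-multi-rescaled-at-t0}. Consequently the fibrewise convolution reads, for a fixed $x\in M$,
\[
(u_0\ast v_0)^x(W)=\int_{T_xM}\exp\!\Bigl(-\tfrac12\kappa(Y,W-Y)\Bigr)\wedge u_0^x(Y)\wedge v_0^x(W-Y)\,dY,
\]
where the wedge of the mixed-degree form $\exp(-\tfrac12\kappa(\cdot,\cdot))$ makes sense because $\wedge T^\ast_xM$ is finite-dimensional, so the exponential is a finite sum.

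Next I would take the fibrewise Fourier transform in $W$ and change variables $(Y,W)\mapsto(Y,Z)$ with $Z=W-Y$ (Jacobian one) to separate the two convolution variables:
\[
\widehat{u_0\ast v_0}(\xi)=\iint e^{-i\xi\cdot(Y+Z)}\exp\!\Bigl(-\tfrac12\kappa(Y,Z)\Bigr)\wedge u_0^x(Y)\wedge v_0^x(Z)\,dY\,dZ.
\]
Writing $\widehat u_0(\xi)\wedge\widehat v_0(\eta)=\iint e^{-i\xi\cdot Y-i\eta\cdot Z}u_0^x(Y)\wedge v_0^x(Z)\,dY\,dZ$ and using $\partial_{\xi_i}e^{-i\xi\cdot Y}=-iY_ie^{-i\xi\cdot Y}$ (and similarly for $\eta$), the bilinear form $\kappa(\partial_\xi,\partial_\eta)$ acts on the plane-wave factor as multiplication by the 2-form $\kappa(-iY,-iZ)$; by bilinearity of $\kappa$ the constant absorbs into the conventional sign inside the exponential (matching Getzler's convention). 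Expanding $\exp(-\tfrac12\kappa(\partial_\xi,\partial_\eta))$ as its finite Taylor series and interchanging the finite sum with the integral then produces exactly the integrand just displayed, and the final evaluation $\xi=\eta$ installs the phase $e^{-i\xi\cdot(Y+Z)}$. This identifies the two expressions.

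The only delicate points are (i) checking that the finite exterior-algebra Taylor expansion of $\exp(-\tfrac12\kappa(\partial_\xi,\partial_\eta))$ commutes with the integration against $u_0,v_0\in\mathcal E'_r$ (which follows because each individual term is a polynomial in $\partial_\xi,\partial_\eta$, so the operation is just a finite combination of Fourier-transformed moments of compactly supported fibred distributions), and (ii) verifying that the wedge-product orderings coming from the multiplicative structure \eqref{eq-multi-rescaled-at-t0} are consistent with how the differential operator distributes $\kappa$-factors against $\widehat u_0(\xi)\wedge\widehat v_0(\eta)$. Both are bookkeeping issues; the former is where I would expect to spend the most care, since one must ensure the Fourier-transform/derivative interchange is legitimate for $r$-fibred compactly supported distributions (it is, because the restriction to $t=0$ is compactly supported in each fibre of $r$, by Remark~\ref{rmk-support}). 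Once these are in hand, the proof reduces to the direct computation above, and the agreement with \cite[Theorem~2.7]{Getzler83} is immediate.
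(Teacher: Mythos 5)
Your proposal is essentially the same computation as the paper's proof, but the paper phrases it more carefully. The substantive difference is how each handles the fact that $u_0, v_0$ are $\wedge T^\ast M$-valued \emph{distributions} rather than functions. The paper's proof invokes Proposition~\ref{prop-decomposition-distribution-tangent-bundle} to write $u_0 = \sum_i u_i \cdot \pi^\ast s_i$ and $v_0 = \sum_j v_j \cdot \pi^\ast s_j$ with $u_i, v_j$ scalar distributions and $s_i, s_j$ smooth sections of the exterior algebra; the computation is then carried out by testing against a Schwartz section $\widehat{\varphi}$, and the diagonal restriction $\xi=\eta$ emerges because the two-variable inverse Fourier transform of $\widehat{\varphi}(X+Y)$ is $\varphi(\xi)\cdot\delta_{\xi=\eta}$. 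You instead write formal integrals directly against $u_0^x$ and $v_0^x$, change variables $Z = W-Y$, and install $\xi=\eta$ at the end by hand. These are dual ways of saying the same thing, and both are correct.

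What the paper's decomposition buys is that the differential operator $\exp(-\tfrac12\kappa(\partial_\xi,\partial_\eta))$ only ever acts on the scalar factors $\widehat{u}_i, \widehat{v}_j$, with the exterior-algebra bookkeeping riding along on the smooth sections $\pi^\ast s_i \wedge \pi^\ast s_j$. This dissolves both of your ``delicate points'' (i) and (ii) at once: the interchange of the finite $\kappa$-expansion with the distributional pairing is automatic because the $\kappa$-factors only multiply the scalar distributions (standard Fourier calculus), and the wedge orderings are fixed once and for all by the choice of sections $s_i$. Your version acknowledges these points but leaves them as assertions rather than reducing them to something already established; if you want to make your argument airtight without invoking the decomposition, you would need to justify the formal integral representation of the convolution for $r$-fibred distributions and then argue that the finite-order differential operator commutes with the pairing, which is precisely what the decomposition sidesteps. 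One more small caveat: your claim that ``the constant absorbs into the conventional sign'' when converting $\kappa(-iY,-iZ)$ to $\kappa(\partial_\xi,\partial_\eta)$ is a sign-convention matter that should really be pinned down against the Fourier convention used for $\mathcal{S}'(TM,\pi^\ast\wedge T^\ast M)$ in Section~\ref{sec-dis-with-coeff}; as written it is a gesture rather than a verification, though the same issue is implicit in the paper's proof as well.
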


\begin{proof}
	Let $\pi: TM\to M$ be the bundle projection, since $u_0, v_0 \in \mathcal{E}_\pi^\prime(TM, \pi^\ast \wedge T^\ast M)$ , according to Proposition~\ref{prop-decomposition-distribution-tangent-bundle}, we may assume that $u_0$ and $v_0$ have the following forms
	\[
	u_0 = \sum_i u_i\cdot \pi^\ast s_i 
	\]
	and 
	\[
	v_0 = \sum_j v_j \cdot \pi^\ast s_j
	\]
	where $s_i$ are some sections of the bundle of exterior algebras over $M$. Then for any Schwartz section $\varphi$ of $\pi^\ast \wedge T^\ast M \to TM$ we have
	\begin{align*}
		\langle u_0\ast v_0, \widehat{\varphi} \rangle(x) &= \left\langle u_0^x(X), \left\langle  v_0^x(Y), m^\ast_{X,Y} \widehat{\varphi}(X+Y)\right\rangle \right\rangle  \\
		&=\sum_{i,j} \left\langle u_i^x(X) \cdot \pi^\ast s_i, \left\langle v^x(Y)\cdot \pi^\ast s_j, m_{X,Y}^\ast \widehat{\varphi}(X+Y) \right\rangle\right\rangle \\ 
		&= \sum_{i,j} \langle u^x_i(X)\otimes v^x_j(Y), \langle \pi^\ast s_i\otimes \pi^\ast s_j, m^\ast_{X,Y} \widehat{\varphi}(X+Y) \rangle \rangle\\
		&= \sum_{i,j} \left\langle \exp\left(-\frac{1}{2}\kappa(X,Y)\right) u^x_i(X)\otimes v^x_j(Y) \pi^\ast s_i\wedge  \pi^\ast s_j, \widehat{\varphi}(X+Y) \right\rangle .
	\end{align*}
where from the third line to the fourth line, we use the definition of $m_{X,Y}^\ast$.  The function $\widehat{\varphi}(X+Y)$ when taking as a function with two variables $\widehat{F}(X,Y)$, its inverse Fourier transformation on both variables is given by 
\begin{align*}
F(\xi,\eta) &= \int \widehat{F}(X,Y)e^{-iX\cdot \xi-Y\cdot \eta}dXdY \\
&= \int \widehat{\varphi}(X+Y) e^{-iX\cdot \xi-Y\cdot \eta}dXdY \\
&= \int \widehat{\varphi}(X) e^{-iX\cdot \xi}dX \int e^{-iY\cdot(\eta-\xi)}dY \\
&= \varphi(\xi)\cdot \delta_{\xi=\eta}
\end{align*}
According to the definition of Fourier transformation of compactly supported distributions, we conclude that  the Fourier transformation of $u_0\ast v_0$ is given by first take the Fourier transformation of 
\[
\exp(-\frac{1}{2}\kappa(X,Y))\sum_{i,j} u_i(X)\otimes v_j(Y) \cdot \pi^\ast s_i\wedge \pi^\ast s_j.
\]
on both variables and then set $\xi=\eta$. The formula is expressed precisely as in \eqref{eq-multiplication-formula-at-zero-fourier}.
\end{proof}

Analogous to Corollary~\ref{coro-conv-dis-d-e}, the same proof results in the following Corollary.
\begin{corollary}\label{coro-fourier}
	Let $u,v\in \mathcal{D}^\prime_r(G,E)$, at least one of them belong to $\mathcal{E}^\prime_r(G,E)$ with proper support and $u_0, v_0$ both belong to $\mathcal{S}^\prime(TM, \pi^\ast \wedge T^\ast M)$, then the Fourier transformation of $u_0\ast v_0$ is computed by the same formula \eqref{eq-multiplication-formula-at-zero-fourier}.
\end{corollary}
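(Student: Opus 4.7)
The plan is to mimic the proof of Proposition~\ref{prop-multiplication-formula-at-zero-fourier} step by step, verifying that each manipulation remains legitimate once we weaken ``compactly supported'' to ``properly supported with tempered restriction at $t=0$.'' First I would invoke Corollary~\ref{coro-conv-dis-d-e} to ensure that $u \ast v$ is a bona fide element of $\mathcal{D}^\prime_r(\mathbb{T}M, \mathbb{S})$, so that the restriction $(u \ast v)|_{t=0}$ makes sense and equals the fiberwise convolution $u_0 \ast v_0$ on $TM \to M$. The proper-support hypothesis on one of $u, v$ translates into fiberwise compactness of the support of one of $u_0, v_0$ over $M$, which is exactly what is needed for the convolution of tempered distributions on each fiber $T_xM$ to exist and remain tempered.

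Next, because both $u_0$ and $v_0$ are tempered sections of $\pi^\ast \wedge T^\ast M \to TM$, the tempered version of Proposition~\ref{prop-decomposition-distribution-tangent-bundle} supplies finite decompositions
\[
u_0 = \sum_i u_i \cdot \pi^\ast s_i, \qquad v_0 = \sum_j v_j \cdot \pi^\ast s_j,
\]
with $u_i, v_j \in \mathcal{S}^\prime(TM)$ scalar tempered distributions and with one of the two families of scalars inheriting fiberwise compact support from the properly supported factor. This reduces the problem to a scalar computation exactly parallel to the one in Proposition~\ref{prop-multiplication-formula-at-zero-fourier}.

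I would then repeat the chain of equalities verbatim: paired against a Schwartz section $\widehat{\varphi}$ of $\pi^\ast \wedge T^\ast M$, the convolution unfolds using the definition of $m^\ast_{X,Y}$ and the explicit multiplication formula \eqref{eq-multi-rescaled-at-t0} on $t=0$, producing the twisting factor $\exp(-\tfrac{1}{2}\kappa(X,Y))$ in front of the tensor product of scalar distributions. Each pairing in this chain is well defined because $\widehat{\varphi}$ is Schwartz, the scalars $u_i, v_j$ are tempered, and the fiberwise compact support of one factor allows the iterated pairing to be interpreted by Remark~\ref{rmk-support}. The final Fourier-theoretic step---recognizing that the inverse Fourier transform of $\widehat{\varphi}(X+Y)$ viewed as a function of two variables is $\varphi(\xi) \cdot \delta_{\xi=\eta}$---is a standard Schwartz-function identity and is insensitive to whether the distributions paired against it are compactly supported or merely tempered.

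The main obstacle I expect is the convolvability issue: one has to be sure that the fiberwise convolution of a tempered distribution with a fiberwise-compactly-supported tempered distribution is again tempered in a uniform way over $x \in M$, so that taking its Fourier transform and then restricting to the diagonal $\xi = \eta$ (which is where the expression $\exp(-\tfrac{1}{2}\kappa(\partial_\xi,\partial_\eta))\widehat{u}_0(\xi)\wedge\widehat{v}_0(\eta)|_{\xi=\eta}$ lives) is unambiguously defined. This is handled by the finite decomposition above: each summand reduces to the scalar case, and the exponential operator $\exp(-\tfrac{1}{2}\kappa(\partial_\xi,\partial_\eta))$ expands into a \emph{finite} sum of differential operators because $\kappa$ is a $2$-form on a manifold of finite dimension, so the higher wedge powers vanish. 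Thus the formal expression \eqref{eq-multiplication-formula-at-zero-fourier} acts as a genuine finite-order differential operator on the tensor product $\widehat{u}_0 \boxtimes \widehat{v}_0$, and restricting to the diagonal is a continuous operation on tempered distributions once the fiberwise-compact factor is in place. The remainder of the argument is identical to the proof of Proposition~\ref{prop-multiplication-formula-at-zero-fourier}.
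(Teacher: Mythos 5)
Your proposal is correct and takes essentially the same route as the paper, which disposes of this corollary in a single sentence appealing to the analogy with Corollary~\ref{coro-conv-dis-d-e}; you are simply supplying the details the authors leave implicit. In particular, you correctly identify the two points that need checking beyond the proof of Proposition~\ref{prop-multiplication-formula-at-zero-fourier}: that the decomposition of Proposition~\ref{prop-decomposition-distribution-tangent-bundle} works in the tempered setting, and that the fiberwise compact support of one factor (which also makes one of $\widehat{u}_0$, $\widehat{v}_0$ a smooth function) is what makes both the iterated pairing and the restriction to the diagonal $\xi=\eta$ legitimate.
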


\section{Pseudodifferential operators on rescaled bundle}\label{sec-psido}



The following pseudo-local property of elements in $\psidom$ is a consequence of the properness of their support and the essentially homogeneous condition \eqref{eq-essentially-homogeneous-condition}.

\begin{proposition}\cite[Proposition~22]{VanErpYuncken15}
	Let $\mathbb{P}\in \Psi^m(\mathbb{T}M, \mathbb{S})$ for some $m\in \mathbb{R}$, then $\mathbb{P}$ is smooth on $\mathbb{T}M\backslash \mathbb{T}M^{(0)}$. \qed
\end{proposition}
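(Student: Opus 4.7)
The plan is to follow the pseudo-local argument of Van Erp and Yuncken adapted to the vector-bundle setting of $\mathbb{S}$. The underlying principle is the classical fact that an essentially homogeneous distribution on a vector space, with respect to a linear dilation, is smooth away from the fixed point of the dilation, since scaling propagates regularity from far away back to any point off the fixed locus.

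Fix $\gamma_0 \in \mathbb{T}M \setminus \mathbb{T}M^{(0)}$. Since smoothness is local, I would split into two cases depending on which stratum $\gamma_0$ lies in. In the first case, $\gamma_0 = (x_0, Y_0, 0)$ with $Y_0 \neq 0$, I would use the diffeomorphism $\Phi$ of \eqref{eq-smooth-tangent} together with Proposition~\ref{prop-trivialization-of-local-rescaled-bundle} to transport $\mathbb{P}|_{\mathbb{U}}$ to a distribution on an open subset of $TM\times \mathbb{R}$ taking values in $\rho^\ast \wedge T^\ast M$. A preliminary lemma is needed: under this transport the action $\alpha_\lambda$ corresponds, up to a smooth bundle automorphism coming from the parallel translation $\tau_2$, to the anisotropic dilation $(x,Y,t)\mapsto (x,\lambda^{-1}Y,\lambda t)$; consequently the essentially homogeneous condition \eqref{eq-essentially-homogeneous-condition} passes to the transported distribution with a smooth correction. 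With this in hand, iterating the homogeneity gives, for every $k\in\N$, an identity
\[
\alpha_{\lambda^k,\ast}\mathbb{P} - \lambda^{km}\mathbb{P} \in C^\infty_p(\mathbb{T}M,\mathbb{S}).
\]
For a test section $\varphi$ supported in a small neighborhood of $\gamma_0$ missing the locus $\{Y=0, t=0\}$, rewriting $\langle\mathbb{P},\varphi\rangle = \lambda^{-km}\langle\mathbb{P},\alpha_{\lambda^k}^\ast\varphi\rangle - \lambda^{-km}\langle R^{(k)},\varphi\rangle$ and sending $k\to\infty$ with $\lambda>1$ pushes the support of $\alpha_{\lambda^k}^\ast \varphi$ out to $|Y|$ or $|t|$ large. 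The proper support of $\mathbb{P}$ together with the smoothness of the $R^{(k)}$ forces $\langle\mathbb{P},\varphi\rangle$ to be given by integration against a smooth section, so $\mathbb{P}$ is smooth at $\gamma_0$.

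In the second case, $\gamma_0 = (x_0,y_0,t_0)\in M\times M\times\R^\ast$ with $x_0\neq y_0$, the rescaled bundle is simply the pullback of $S^\ast\boxtimes S$, the smooth structure is the classical one, and the orbit $\{(x_0,y_0,\lambda t_0)\}$ stays inside this stratum. I would apply the same iteration of essential homogeneity, this time letting $\lambda\to 0$; the orbit then exits the chart $\mathbb{U}$ through the $t=0$ boundary with $|Y|\to\infty$, and properness of the support again produces only smooth contributions, so smoothness at $\gamma_0$ follows from the smoothness of the accumulated error terms together with Case~1 applied along the approach.

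The main obstacle will be the first step of Case~1: verifying rigorously that the trivialization of $\mathbb{S}$ near $t=0$ provided by Proposition~\ref{prop-trivialization-of-local-rescaled-bundle} intertwines $\alpha_\lambda$ with the standard anisotropic dilation up to a smooth bundle automorphism. This requires a short computation using the explicit form of the map $S$ in Proposition~\ref{prop-trivialization-of-local-rescaled-bundle} and the definition of $\alpha_\lambda$ on the rescaled bundle; once it is checked, the argument proceeds componentwise with respect to the local frame $\{\widehat{e_I t^{\ell(I)}}\}$ of Example~\ref{exam-scaling-order}, and reduces to the scalar statement of \cite[Proposition~22]{VanErpYuncken15}.
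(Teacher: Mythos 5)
The paper itself gives no proof of this proposition: it simply cites Van Erp and Yuncken's Proposition~22 and terminates the statement with~\qed. Your proposal is therefore being compared against the cited source rather than an internal argument, and it is a broadly faithful reconstruction of the zoom argument there: iterated essential homogeneity gives $\alpha_{\lambda^k,\ast}\mathbb{P}-\lambda^{km}\mathbb{P}=R^{(k)}\in C^\infty_p(\mathbb{T}M,\mathbb{S})$, and once $k$ is large enough that $\alpha_{\lambda^{-k}}$ carries a fixed neighborhood of $\gamma_0$ outside the proper support of $\mathbb{P}$, the identity $\langle\mathbb{P},\varphi\rangle=-\lambda^{-km}\langle R^{(k)},\varphi\rangle$ exhibits $\mathbb{P}$ near $\gamma_0$ as the smooth section $-\lambda^{-km}R^{(k)}$, so the core mechanism is right. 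Two small points should be fixed. In Case~2 you say ``letting $\lambda\to 0$'' and then describe the orbit escaping through the $t=0$ boundary with $|Y|\to\infty$; these are inconsistent. The relevant set is $\operatorname{supp}(\alpha_{\lambda^k}^\ast\varphi)=\alpha_{\lambda^{-k}}(\operatorname{supp}\varphi)$, and for $\gamma_0=(x_0,y_0,t_0)$ with $t_0\neq0$, $x_0\neq y_0$, taking $\lambda>1$ and $k\to\infty$ sends $t\to 0$, escaping through the $t=0$ boundary (where $|Y|\to\infty$ in $\Phi$-coordinates), whereas $\lambda<1$ sends $|t|\to\infty$; either works by proper support, but pick one direction and describe it correctly. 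Second, the closing appeal to ``Case~1 applied along the approach'' is unnecessary and a bit misleading: Case~2 is self-contained, since for a fixed sufficiently large $k$ the identity above already holds on a full neighborhood of $\gamma_0$ and yields smoothness there without any reference to the $t=0$ stratum. Finally, the preliminary lemma you flag is already supplied in the paper: the conjugated action $\widetilde{\alpha}_\lambda$ and the scaling behavior \eqref{eq-rescaled-section} of the frame $\widehat{e_It^{\ell(I)}}$ are stated explicitly just after Definition~\ref{def-cut-off}, so that step requires no further correction term beyond what is recorded there.
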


As a consequence, modulo the space of smooth sections, elements in $\psidom$ may be assumed to have support within $\mathbb{U}$.

\begin{definition}\label{def-cut-off}
	A smooth function $\varphi\in C^\infty(\mathbb{T}M)$ is called a cut-off function if it is identically one on some neighborhood of $\mathbb{T}M|_{t=0}\cup  \mathbb{T}M^{(0)}$ and supported within $\mathbb{U}$.
\end{definition}

\begin{proposition}\cite[ Lemma~27]{VanErpYuncken15}
	\label{prop-localization-of-distribution}
	There is a cut-off function $\varphi\in C^\infty(\mathbb{T}M)$ such that
	for any  $\mathbb{P} \in \Psi^m(\mathbb{T}M; \mathbb{S}), \mathbb{P}^\prime=\varphi\cdot \mathbb{P} \in \Psi^m(\mathbb{T}M; \mathbb{S})$ is supported within $\mathbb{U}$ and satisfies $\mathbb{P}-\mathbb{P}^\prime\in C^\infty_p(\mathbb{T}M; \mathbb{S})$. \qed
\end{proposition}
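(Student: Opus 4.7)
The plan is to adapt the argument in \cite{VanErpYuncken15} to the rescaled bundle by constructing $\varphi$ explicitly using the coordinates $\Phi:\widetilde{\mathbb{U}}\to \mathbb{U}$, and then verifying the two required properties by direct calculation. The proof falls naturally into three steps: building $\varphi$, establishing smoothness of $(1-\varphi)\mathbb{P}$, and checking essential homogeneity of $\varphi\mathbb{P}$.

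For the construction, I would work in the coordinates $(x,Y,t)$ on $\widetilde{\mathbb{U}}$. Pulling back $\alpha_\lambda$ along $\Phi$ gives the formula $(x,Y,t)\mapsto (x,\lambda^{-1}Y,\lambda t)$, so the quantity $|tY|$ is $\alpha_\lambda$-invariant, and the set $\mathbb{T}M|_{t=0}\cup \mathbb{T}M^{(0)}$ corresponds to $\{t=0\}\cup\{Y=0\}$ --- precisely $\{|tY|=0\}$ in the chart. Fixing $R>0$ smaller than the injectivity radius governing $U$, and a smooth $\beta:[0,\infty)\to[0,1]$ with $\beta\equiv 1$ near $0$ and $\supp\beta\subset [0,R)$, I set $\varphi(x,Y,t)=\beta(|tY|)$ on $\widetilde{\mathbb{U}}$, transport it to $\mathbb{U}$ via $\Phi$, and extend by zero. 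The choice of $R$ ensures that the support closes up inside $\mathbb{U}$, making the extension smooth on all of $\mathbb{T}M$. By construction $\varphi\equiv 1$ on an open neighborhood of $\mathbb{T}M|_{t=0}\cup \mathbb{T}M^{(0)}$, $\supp\varphi\subset \mathbb{U}$, and crucially $\alpha_{\lambda,\ast}\varphi=\varphi$.

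The identity $\mathbb{P}-\mathbb{P}'=(1-\varphi)\mathbb{P}\in C^\infty_p(\mathbb{T}M,\mathbb{S})$ then follows from the preceding pseudo-local proposition: since $1-\varphi$ vanishes on an open neighborhood $V$ of $\mathbb{T}M^{(0)}$ and $\mathbb{P}|_{\mathbb{T}M\setminus V}$ is a smooth section, the product $(1-\varphi)\mathbb{P}$ is smooth, and its support is contained in $\supp\mathbb{P}$, hence is $r$-proper.

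For $\mathbb{P}'=\varphi\mathbb{P}\in\Psi^m(\mathbb{T}M,\mathbb{S})$, membership in $\mathcal{E}^\prime_r$ follows from $(\varphi\mathbb{P})(g)=\mathbb{P}(\varphi g)$, and the containment $\supp\mathbb{P}'\subset \mathbb{U}$ is clear. The essentially homogeneous condition reduces, thanks to $\alpha_{\lambda,\ast}\varphi=\varphi$, to the identity
\[
\alpha_{\lambda,\ast}(\varphi\mathbb{P})-\lambda^m\varphi\mathbb{P}=\varphi\bigl(\alpha_{\lambda,\ast}\mathbb{P}-\lambda^m\mathbb{P}\bigr),
\]
and the right-hand side is $\varphi$ times an element of $C^\infty_p$, hence again in $C^\infty_p$. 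The main obstacle I foresee is the smoothness of the zero-extension of $\varphi$ across $\partial \mathbb{U}$ inside $\mathbb{T}M$, which requires a small geometric check that $\{|tY|\leq R\}\cap\widetilde{\mathbb{U}}$ has closed image in $\mathbb{T}M$. This follows once $R$ is taken smaller than the injectivity radius, but is the only point where the smooth structure of $\mathbb{T}M$ beyond the chart $\widetilde{\mathbb{U}}$ actively enters the argument.
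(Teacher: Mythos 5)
Your proof is correct, and it fills in the details of the argument that the paper simply cites from Van Erp--Yuncken (Lemma 27) without reproducing. The essential observations are all there: $|tY|$ is invariant under the pulled-back action $\widetilde{\alpha}_\lambda$, so $\varphi=\beta(|tY|)$ is $\alpha_\lambda$-invariant; this invariance makes the essentially homogeneous condition for $\varphi\mathbb{P}$ reduce to multiplying that of $\mathbb{P}$ by $\varphi$; the pseudo-locality of $\mathbb{P}$ off $\mathbb{T}M^{(0)}$ gives smoothness of $(1-\varphi)\mathbb{P}$; and properness of the supports is inherited from $\supp\mathbb{P}$. You are also right to flag the one geometric point needing care, namely that $\{|tY|\leq R\}$ must close up inside $\mathbb{U}$ so that the zero extension is smooth across $\partial\mathbb{U}$; taking $R$ below a uniform lower bound on the injectivity radius (available since $M$ is compact) ensures that the $\Phi$-image $\{(x,y,t):d(x,y)\leq R\}\cup(TM\times\{0\})$ is closed in $\mathbb{T}M$. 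This is the standard route and matches the cited source.
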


The difference $\mathbb{P}-\mathbb{P}^\prime$ is not only smooth properly supported, but also vanishes on some neighborhood of $\mathbb{T}M|_{t=0}$. In another word, the difference  $\mathbb{P}-\mathbb{P}^\prime$ belongs to the set
\begin{equation}\label{eq-def-a}
A=\{\mathbb{P}\in C^\infty_p(\mathbb{T}M, \mathbb{S}) \mid \mathbb{P} \text{ vanishes to infinite order at } t=0\}.
\end{equation}
In the following discussion, rather than plain distributions in $\psidom$, we shall focus on the space of equivalent classes  $\Psi^m(\mathbb{T}M, \mathbb{S})/A$.

\begin{proposition}{\cite[Definition~41 and Proposition~42]{VanErpYuncken15}}
	Any equivalent class in the quotient space $\Psi^m(\mathbb{T}M, \mathbb{S})/A$ admits a representative that is homogeneous on the nose outside $[-1,1]$ and supported within $\mathbb{U}$. In another word, there is a representative $\mathbb{P}\in \psidom$ that is supported within $\mathbb{U}$ and  satisfy $\mathbb{P}_t=t^m\mathbb{P}_1$ and $\mathbb{P}_{-t}=t^m \mathbb{P}_{-1}$ for all $t>1$.
	\qed
\end{proposition}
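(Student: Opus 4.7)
The plan is a cut-and-paste modification of $\mathbb{P}$. Starting from any representative $\mathbb{P}\in\psidom$ of the given class, Proposition~\ref{prop-localization-of-distribution} supplies a cut-off $\varphi$ with $\varphi\cdot\mathbb{P}$ supported in $\mathbb{U}$ and $\mathbb{P}-\varphi\cdot\mathbb{P}\in A$, so one may assume from the outset that $\mathbb{P}$ is supported in $\mathbb{U}$. Observe further that in the coordinates on $\widetilde{\mathbb{U}}$ the scaling acts by $\alpha_\lambda:(x,Y,t)\mapsto(x,Y/\lambda,\lambda t)$, so $\widetilde{\mathbb{U}}$, and hence $\mathbb{U}$, is invariant under $\alpha_\lambda$.

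Next, define a distribution $\mathbb{Q}$ on $\mathbb{T}M|_{t\neq 0}$ by strict homogeneous extrapolation from the fibers $\mathbb{P}_{\pm 1}$: the unique distribution satisfying $\alpha_{\lambda,*}\mathbb{Q}=\lambda^m\mathbb{Q}$ for every $\lambda\in\mathbb{R}^\ast$ together with $\mathbb{Q}|_{t=1}=\mathbb{P}_1$ and $\mathbb{Q}|_{t=-1}=\mathbb{P}_{-1}$; at the fiber level this is $\mathbb{Q}_t=t^m\mathbb{P}_1$ for $t>0$ and $\mathbb{Q}_t=|t|^m\mathbb{P}_{-1}$ for $t<0$. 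By the scaling-invariance of $\mathbb{U}$, $\mathbb{Q}$ is supported in $\mathbb{U}$. Choose $\chi\in C^\infty(\mathbb{R})$ with $\chi\equiv 1$ on $[-1/2,1/2]$ and $\chi\equiv 0$ outside $(-1,1)$, and set
\[
\widetilde{\mathbb{P}}:=\chi(t)\,\mathbb{P}+(1-\chi(t))\,\mathbb{Q}.
\]
Then $\widetilde{\mathbb{P}}$ is supported in $\mathbb{U}$, coincides with $\mathbb{P}$ for $|t|\leq 1/2$, and equals $\mathbb{Q}$ for $|t|>1$, hence is strictly homogeneous of order $m$ there. The task reduces to showing $\widetilde{\mathbb{P}}-\mathbb{P}=(1-\chi(t))(\mathbb{Q}-\mathbb{P})$ lies in $A$.

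This difference is properly supported and vanishes on $|t|\leq 1/2$, hence to infinite order at $t=0$; the substantive step is smoothness. Writing $D:=\mathbb{Q}-\mathbb{P}$ and subtracting $\alpha_{\lambda,*}\mathbb{Q}=\lambda^m\mathbb{Q}$ from the essentially homogeneous condition for $\mathbb{P}$, one obtains
\[
\alpha_{\lambda,*}D-\lambda^m D\in C_p^\infty(\mathbb{T}M,\mathbb{S}),
\]
so there is a smooth properly supported $R$ with $\alpha_{\lambda,*}D=\lambda^m D+R$. Combined with the exact matching $D|_{t=\pm 1}=0$, the scaling identity evaluated on the fiber over $t=\pm\lambda$ expresses $D_{\pm\lambda}$ purely in terms of $R$, forcing $D$ to be smooth on $\mathbb{T}M|_{t\neq 0}$.

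The main obstacle is this last smoothness claim. Essential homogeneity of $D$ by itself is not enough, since essentially homogeneous distributions need not be smooth; the argument only closes because $D$ also vanishes exactly at $t=\pm 1$, which allows the scaling relation to cancel the diagonal singularities of $\mathbb{Q}_t$ and $\mathbb{P}_t$ at every $t\neq 0$. This is where the choice of $\mathbb{Q}$ as the strict extrapolation of $\mathbb{P}$ itself (rather than from some other strictly homogeneous distribution) enters essentially.
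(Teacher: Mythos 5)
The cut-and-paste strategy — extrapolate $\mathbb{P}_{\pm 1}$ strictly homogeneously and patch with $\chi(t)$ — is the right idea and is essentially what the cited source does, but as written the argument has a genuine gap at its last step, plus a sign slip worth flagging.

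The slip first: with the paper's convention $\alpha_\lambda(x,y,t)=(x,y,\lambda t)$ and pushforward $(\alpha_{\lambda,*}\mathbb{P})_t=\mathbb{P}_{t/\lambda}$, the strict identity $\alpha_{\lambda,*}\mathbb{Q}=\lambda^m\mathbb{Q}$ forces $\mathbb{Q}_t=t^{-m}\mathbb{Q}_1$, not $t^m\mathbb{Q}_1$; you cannot simultaneously have your displayed fiber formula $\mathbb{Q}_t=t^m\mathbb{P}_1$ and the displayed scaling relation. (The proposition as quoted in the paper has the same tension, so this may be inherited, but it needs to be resolved consistently or the cancellation in the next step goes wrong.)

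The substantive gap is in the final smoothness claim. Setting $R_\lambda=\alpha_{\lambda,*}\mathbb{P}-\lambda^m\mathbb{P}$ and evaluating the identity $\alpha_{\lambda,*}D-\lambda^m D=-R_\lambda$ on the $t=\lambda$ fiber, using $D_1=0$, indeed gives $D_\lambda$ as a multiple of $R_\lambda$ restricted to one fiber. Since each $R_\lambda$ lies in $C^\infty_p(\mathbb{T}M,\mathbb{S})$, this shows $D_\lambda$ is a smooth section in $(x,y)$ for each \emph{fixed} $\lambda\neq 0$. But membership in $A$ requires $(1-\chi(t))D$ to be a smooth section of $\mathbb{S}$ \emph{jointly} in $(x,y,t)$, and that needs the additional input that $\lambda\mapsto R_\lambda$ is a \emph{smooth family} in $C^\infty_p(\mathbb{T}M,\mathbb{S})$. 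This is not automatic from the essentially homogeneous condition, which is a separate statement for each $\lambda$; it is a real lemma in Van Erp--Yuncken, proved from the cocycle identity $R_{\lambda\mu}=\mu^m R_\lambda+\alpha_{\lambda,*}R_\mu$ together with a uniform boundedness/Baire-category argument. Your sentence "forcing $D$ to be smooth on $\mathbb{T}M|_{t\neq 0}$" therefore overshoots: either cite the smooth-cocycle lemma explicitly, or prove joint smoothness in $\lambda$ directly (e.g.\ by differentiating the cocycle identity and bootstrapping), before the conclusion $\widetilde{\mathbb{P}}-\mathbb{P}\in A$ can stand.
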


In the following discussion, we shall always choose representatives that are homogeneous on the nose outside $[-1,1]$ and have support within $\mathbb{U}$. Combine with the Proposition~\ref{prop-trivialization-of-local-rescaled-bundle}, these choices enable us to do more explicit local calculation near $\mathbb{U}$. 

The $\alpha$-action given below the equation~\eqref{eq-essentially-homogeneous-condition} on the tangent groupoid is transformed, under the homeomorphism $\Phi$ defined in \eqref{eq-smooth-tangent}, into an action $\widetilde{\alpha}$ on $TM\times \mathbb{R}$ which is given by 
$$
\Phi^{-1}\alpha_\lambda \left(\Phi(x,Y,t)\right)=\widetilde{\alpha}_\lambda(x,Y,t).
$$ 
In another word, $\widetilde{\alpha}_\lambda$ maps $(x,Y,t)$ to $(x,\lambda^{-1}Y, \lambda t)$.

	We shall still use $\widehat{e_I t^{\ell(I)}}$ to denote the spanning sections of the pullback $\Phi^\ast \mathbb{S}\to \widetilde{\mathbb{U}}$ whose value at $(x,Y,t)$ is $e_I(x,\exp_x(-tY)) t^{\ell(I)}$ and whose value at $(x,Y,0)$ is $\wedge^I e$. It can be checked that 
	\begin{equation}\label{eq-rescaled-section}
	T\left(\widetilde{\alpha}_{\lambda,\ast} \widehat{e_I t^{\ell(I)}}(x,Y,t)\right) 
	=  \lambda^{\ell(I)} T\left(  \widehat{e_I t^{\ell(I)}}(x,Y,t)\right).
	\end{equation}
Under the light of Proposition~\ref{prop-trivialization-of-local-rescaled-bundle}, $\mathbb{P}$ is assumed to have support within $\mathbb{U}$, the pullback $\Phi^\ast\mathbb{P}$ belongs to $\mathcal{E}^\prime_r(TM\times \mathbb{R}; \Phi^\ast \mathbb{S})$ and 
can be written as
\[
\Phi^\ast \mathbb{P} = \sum_I u_I \cdot \widehat{e_It^{\ell(I)}}
\]
where $u_I$'s are $r$-fibered distributions on $TM\times \mathbb{R}$ that has support within $\widetilde{\mathbb{U}}$.  We have $T(\Phi^\ast \mathbb{P})\in \mathcal{E}^\prime_r(TM\times \mathbb{R}; \rho^\ast \wedge T^\ast M)$, thus its Fourier transformation 
\[
\widehat{\mathbb{P}}  =\widehat{T(\Phi^\ast \mathbb{P})}= \sum_I \widehat{u}_I \cdot T\left(\widehat{e_It^{\ell(I)}}\right)
\]
belongs to $C^\infty(T^\ast M\times \mathbb{R}; \rho^\ast \wedge T^\ast M)$. The essentially homogeneous condition \eqref{eq-essentially-homogeneous-condition} now becomes
\begin{equation}\label{eq-essentially-homo-condition-fourier}
\beta^\ast_\lambda \widehat{\mathbb{P}}-\lambda^m \widehat{\mathbb{P}}\in \mathcal{S}_\rho(T^\ast M\times \mathbb{R}; \rho^\ast \wedge T^\ast M)
\end{equation}
where $\beta_\lambda$ sends $(x,\xi,t)\in T^\ast M \times \mathbb{R}$ to $(x,\lambda\xi, \lambda t)$ and $\mathcal{S}_\rho$ is the space of Schwartz sections along the fiber of $\rho: T^\ast M\times \mathbb{R}\to M$.
More precisely, we have
\begin{align*}
	\beta^\ast_\lambda \widehat{\mathbb{P}}-\lambda^m \widehat{\mathbb{P}} &= \sum_I \beta_\lambda^\ast \widehat{u}_I\cdot \alpha_{\lambda,\ast} T\left(\widehat{e_I t^{\ell(I)}}\right) -\lambda^m \sum_I \widehat{u}_I \cdot T\left(\widehat{e_I t^{\ell(I)}}\right)\\
	&=\sum_I \beta_\lambda^\ast \widehat{u}_I\cdot \lambda^{\ell(I)}T\left(\widehat{e_I t^{\ell(I)}}\right)-\lambda^m \sum_I \widehat{u}_I\cdot T\left(\widehat{e_I t^{\ell(I)}}\right)\\
	&=\sum_I \lambda^{\ell(I)}\left(\beta^\ast_\lambda \widehat{u}_I-\lambda^{m-\ell(I)}\widehat{u}_I\right)\cdot  T\left(\widehat{e_I t^{\ell(I)}}\right) \in \mathcal{S}_\rho(T^\ast M\times \mathbb{R}; \rho^\ast \wedge T^\ast M)
\end{align*}
which implies that $\beta^\ast_\lambda \widehat{u}_I-\lambda^{m-\ell(I)}\widehat{u}_I$ is of Schwartz class. Thus, $u_I$ is the Schwartz kernel of some scalar pseudodifferential operator of order $m-\ell(I)$.

Let $\mathbb{P}\in \psidom$, as in \cite{VanErpYuncken15}, the operator $P=\mathbb{P}|_{t=1}: C^\infty_c(M,S)\to C^\infty_c(M,S)$ is a properly supported pseudodifferential operator on the spinor bundle $S$ and all  pseudodifferential operators on the spinor bundle can be defined in this way. The full symbol of $\mathbb{P}$ which is defined to be $\widehat{\mathbb{P}}$ can be written as
\begin{equation}\label{eq-extendable-symbol}
	\widehat{\mathbb{P}}(x,\xi,t) = \sum p_i(x,\xi,t) \otimes \omega_i
\end{equation}
where $p_i(x,\xi,t)$ is the symbol of scalar properly supported pseudodifferential operator of order $m-i$ and $\omega_i$ is a differential form of order $i$. By setting $t=1$ in \eqref{eq-extendable-symbol}, it justifies the requirement in \cite[Definition~2.2]{BlockFox90}.

\begin{remark}
	
Sometimes, for simplicity of notation, we shall write $\widetilde{\mathbb{P}}$ for  $T(\Phi^\ast \mathbb{P})$. The essentially homogeneous condition \eqref{eq-essentially-homogeneous-condition} becomes
\begin{equation}\label{eq-essentially-homogeneous-condition-pullback}
\widetilde{\alpha}_{\lambda,\ast}\widetilde{\mathbb{P}}-\lambda^m\cdot \widetilde{\mathbb{P}}\in C^\infty(TM\times \mathbb{R}; \rho^\ast \wedge T^\ast M)
\end{equation}
here again, $\rho: TM\times \mathbb{R} \to M$ denotes the bundle projection.
\end{remark}

As $\widetilde{\mathbb{P}}$ belongs to the space of fibered distributions $\mathcal{E}^\prime_\pi(TM\times \mathbb{R}, \rho^\ast \wedge T^\ast M)$ along the fiber of $\pi: TM\times \mathbb{R} \to M\times \mathbb{R}$, one may define its derivative with respect to $t$ as another fibered distribution in $\mathcal{E}^\prime_\pi(TM\times \mathbb{R}, \rho^\ast \wedge T^\ast M)$ given by 
\[
\langle \partial_t \widetilde{\mathbb{P}}, f\rangle = \partial_t \langle \widetilde{\mathbb{P}}, f\rangle- \langle \widetilde{\mathbb{P}}, \partial_t f \rangle.
\]
Under this light, one can define the Taylor's expansion of $\mathbb{P}$ to be the series
\[
\widetilde{\mathbb{P}}|_{t=0}+t\cdot  \partial_t\widetilde{\mathbb{P}}|_{t=0}+\frac{t^2}{2}\cdot \partial_t^2 \widetilde{\mathbb{P}}|_{t=0}+\cdots.
\]
In fact, the class of $\mathbb{P}$ in $\psidom/A$ is determined by the Taylor's expansion of $\widetilde{\mathbb{P}}$.

\begin{proposition}
	If $\mathbb{P}$ and $\mathbb{Q}$ are representatives of some class in $\psidom/A$, then they belong to the same class if and only if they have the same Taylor's expansion. \qed
\end{proposition}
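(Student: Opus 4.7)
My plan is to first dispose of the easy direction. If $\mathbb{P}-\mathbb{Q}\in A$, then by the definition \eqref{eq-def-a} the difference is smooth and vanishes to infinite order at $t=0$; hence so does its pullback $\widetilde{\mathbb{P}}-\widetilde{\mathbb{Q}}$, which forces every Taylor coefficient at $t=0$ to vanish and so the two Taylor expansions coincide term by term.

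For the converse, I would set $R=\mathbb{P}-\mathbb{Q}\in\psidom$ and assume $\partial_t^k\widetilde{R}\big|_{t=0}=0$ for every $k\geq 0$, aiming to show $R\in A$. First I would invoke Proposition~\ref{prop-localization-of-distribution} to replace $R$ by a representative supported in $\mathbb{U}$; the discarded correction belongs to $A$ and has vanishing Taylor expansion, so the hypothesis on $\widetilde{R}$ is preserved. The pseudo-local property then confines the singularities of $\widetilde{R}$ to the zero section $\{Y=0\}\subset TM\times\mathbb{R}$, so that the remaining task is to prove that $\widetilde{R}$ is actually smooth across $\{Y=0\}$ and vanishes to infinite order at $t=0$ (the second condition being the hypothesis itself away from $\{Y=0\}$ by smoothness).

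The core step would be a rescaling argument in Fourier variables. Expanding in the local frame of Example~\ref{exam-scaling-order},
\[
\widetilde{R}=\sum_I u_I\cdot\widehat{e_It^{\ell(I)}},
\]
the smoothness and non-degeneracy of this frame at $t=0$ transfer the vanishing hypothesis to the scalar coefficients, giving $\partial_t^k u_I|_{t=0}=0$ for all $k$ and $I$. By the discussion preceding the proposition, each symbol $\widehat{u}_I(x,\xi,t)$ is a symbol of order $m-\ell(I)$ in $\xi$ satisfying the essentially jointly homogeneous condition
\[
\widehat{u}_I(x,\lambda\xi,\lambda t)=\lambda^{m-\ell(I)}\widehat{u}_I(x,\xi,t)+r_\lambda(x,\xi,t),\qquad r_\lambda\in\mathcal{S}_\rho.
\]
Substituting $\lambda=|\xi|$ and $\xi_0=\xi/|\xi|$ yields
\[
\widehat{u}_I(x,\xi,t)=|\xi|^{m-\ell(I)}\,\widehat{u}_I(x,\xi_0,t/|\xi|)+r_{|\xi|}(x,\xi_0,t/|\xi|),
\]
and flatness of $\widehat{u}_I(x,\xi_0,\cdot)$ at $0$ forces the leading term to be $O\bigl(t^N|\xi|^{m-\ell(I)-N}\bigr)$ for every $N$. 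Hence for each fixed $t\neq 0$ the symbol $\widehat{u}_I(x,\,\cdot\,,t)$ is rapidly decreasing in $\xi$; inverting the Fourier transform gives joint smoothness of $u_I$ for $t\neq 0$, which combined with the flatness hypothesis at $t=0$ places $\widetilde{R}$, and hence $R$, in $A$.

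The main obstacle will be to make this rescaling argument quantitatively rigorous. The essentially homogeneous identity holds for each fixed $\lambda$, whereas the argument takes $\lambda=|\xi|\to\infty$; one must verify, from the precise topology of $\mathcal{S}_\rho$ and the joint scaling~\eqref{eq-essentially-homo-condition-fourier}, that the rescaled remainder $r_{|\xi|}(x,\xi_0,t/|\xi|)$ decays faster than any power of $|\xi|$ uniformly for $t$ in a bounded interval, and that the Schwartz seminorms of $\widehat{u}_I(x,\,\cdot\,,t)$ depend smoothly enough on $t$ for the inverse Fourier transform to be jointly smooth at $t=0$.
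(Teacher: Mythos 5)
The paper supplies no argument for this proposition (the \(\qed\) sits on the statement itself), so there is no ``paper's proof'' to compare against; what follows is an assessment of your argument on its own terms.

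Your easy direction is fine, and the overall strategy for the converse — localize via Proposition~\ref{prop-localization-of-distribution}, invoke pseudo-locality, expand in the frame \(\widehat{e_It^{\ell(I)}}\), and then use a Fourier rescaling to extract rapid decay in \(\xi\) from flatness in \(t\) — is the right one. But there is a genuine gap exactly where you flag it, and it is not merely a quantitative loose end. In your substituted identity
\[
\widehat{u}_I(x,\xi,t)=|\xi|^{m-\ell(I)}\,\widehat{u}_I(x,\xi_0,t/|\xi|)+r_{|\xi|}(x,\xi_0,t/|\xi|),
\]
the remainder is evaluated at the \emph{bounded} point \((\xi_0,t/|\xi|)\), so the Schwartz decay of \(r_\lambda\) in its own arguments does nothing for you; you need decay in \(\lambda=|\xi|\), and the essentially homogeneous condition~\eqref{eq-essentially-homo-condition-fourier}, taken literally, only gives Schwartzness of \(r_\lambda\) for each \emph{fixed} \(\lambda\) with no control as \(\lambda\to\infty\). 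Without that, your leading-term estimate \(O(t^N|\xi|^{m-\ell(I)-N})\) can be swamped by the remainder, and the rapid decay you want does not follow. There is a second, unacknowledged gap in the final step: smoothness of \(u_I\) for each fixed \(t\neq 0\) together with distributional flatness of the Taylor coefficients at \(t=0\) does not by itself give joint smoothness of \(\widetilde{R}\) across the corner \(\{Y=0,t=0\}\); one needs uniform estimates in \(t\) near \(0\).

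Both gaps are closed by using the stronger structural fact that the paper itself cites right before introducing \(\operatorname{Symb}^m\): by \cite[Proposition~43 and Corollary~45]{VanErpYuncken15}, each full symbol \(\widehat{u}_I\) equals a function \(a_h\) that is \emph{genuinely} homogeneous of degree \(m-\ell(I)\) in \((\xi,t)\) outside a compact set, plus an element of \(\mathcal{S}_\rho\). With that decomposition there is no \(\lambda\)-dependent remainder to control: the flatness hypothesis forces all \(\partial_t^k a_h(x,\xi_0,0)\) to vanish on the unit \(\xi\)-sphere (because \(|\xi|^{m-\ell(I)-k}\partial_t^k a_h(x,\xi_0,0)\) must match the Schwartz-class function \(-\partial_t^k s(x,\xi,0)\) for \(|\xi|\) large), and then exact homogeneity gives \(\partial_x^\alpha\partial_\xi^\beta\partial_t^\gamma a_h(x,\xi,t)=O_N\bigl(t^N(1+|\xi|)^{m-\ell(I)-N-|\beta|}\bigr)\) uniformly for bounded \(t\). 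These uniform symbol estimates are exactly what you need to conclude that the inverse Fourier transform is jointly smooth up to and including \(t=0\) and vanishes there to infinite order, so \(R\in A\). I would therefore replace your third paragraph with this homogeneous-plus-Schwartz decomposition rather than trying to track \(r_\lambda\) directly.
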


Therefore, in the following discussion, we may take the space $\Psi^\ast(\mathbb{T}M, \mathbb{S})/A$ as the space of Taylor's expansion. Moreover, this space has a multiplicative structure: The Taylor's expansion of $\mathbb{P}\ast \mathbb{Q}$ is given by
\begin{multline*}
\widetilde{\mathbb{P}}|_{t=0}\ast \widetilde{\mathbb{Q}}|_{t=0} + t\cdot \left(\partial_t\widetilde{\mathbb{P}}|_{t=0}\ast \widetilde{\mathbb{Q}}|_{t=0}+ \widetilde{\mathbb{P}}|_{t=0}\ast \partial_t\widetilde{\mathbb{Q}}|_{t=0}\right)
\\+ 
\frac{t^2}{2}\cdot \left(\partial^2_t\widetilde{\mathbb{P}}|_{t=0}\ast \widetilde{\mathbb{Q}}|_{t=0}+ \partial_t \widetilde{\mathbb{P}}|_{t=0}\ast \partial_t \widetilde{\mathbb{Q}}|_{t=0}+ \widetilde{\mathbb{P}}|_{t=0}\ast \partial^2_t\widetilde{\mathbb{Q}}|_{t=0}\right)+\cdots.
\end{multline*}


\section{Symbols}\label{sec-symbols}

\subsection{Symbol and quantization}
	Let  $\mathbb{P}\in \Psi^m(\mathbb{T}M, \mathbb{S})$ be a representative of some class in $\psidom/A$, we shall say that the Fourier transformation of $\widetilde{\mathbb{P}}$ is a full symbol of the equivalent class $[\mathbb{P}]\in \psidom/A$. As the Fourier transformation of compactly supported distributions are smooth functions, it is easy to see that the space of full symbols are contained inside the space $C^\infty(T^\ast M\times \mathbb{R}, \rho^\ast \wedge T^\ast M)$.
	According to \cite[Proposition~43 and Corollary~45]{VanErpYuncken15}, the full symbol $a$ is equal to a genuinely homogeneous function at infinity modulo $\mathcal{S}_\rho(T^\ast M\times \mathbb{R}; \rho^\ast \wedge T^\ast M)$ and satisfies the  estimate \eqref{eq-classical-symbol-estimate}.

Of course, this definition of full symbols depend on the choice of representatives, single equivalent class may have more than one full symbols. This ambiguity can be removed by looking at the Taylor's expansion of full symbols. Indeed, let $\mathbb{P}^{\prime}$ be another representative, the difference
$
 \mathbb{P}- \mathbb{P}^{\prime}
$
has zero Taylor's expansion, and therefore its full symbols also has zero Taylor's expansion. We denote by $\operatorname{Symb}^m\subset  C^\infty(T^\ast M\times \mathbb{R};\rho^\ast \wedge T^\ast M)$ the space of all full symbols of $\mathbb{P} \in \psidom$ and by $\operatorname{Taylor-Symb}^m$ the space of all Taylor's expansion of full symbols.

\begin{definition}
	Let $\mathbb{P}\in \psidom$ be a representative, the full symbol map
	\begin{equation}\label{eq-symbol-map}
	\Sigma: \psidom/A\to \operatorname{Taylor-Symb}^m
	\end{equation}
	takes the class of $\mathbb{P}$ to the Taylor's expansion of any full symbols of $\mathbb{P}$.
\end{definition}

\begin{remark}
	There is a more direct way to calculate the full symbol from the distribution. Indeed,
	\begin{align*}
		q_t(a(x,\xi,t))s(x) &= q_t \langle \widetilde{\mathbb{P}}, e^{iY\cdot \xi} \rangle s(x) \\
		&=
		\int_{T_xM} \mathbb{P}(x,\exp_x(-tY),t) e^{iY\cdot \xi}  \tau(x,\exp_x(-tY),t) s(x) dY \\
		&=\int_M \mathbb{P}(x,y,t) e^{-i\exp_x^{-1}y\cdot \xi/t}  \tau(x,y) s(x) d\mu(y) \\
		&=\mathbb{P}_{t,y}(e^{-i\exp_x^{-1}y\cdot \xi/t} s_x(y))|_{y=x}
	\end{align*}
	where $s_x(y) = \tau(x,y)s(x)$ and $\mu$ is the Riemannian measure on $M$. By setting $t=1$, this recover the formula in \cite[Definition~2.4]{BlockFox90}.
\end{remark}


\begin{example}\label{exam-symbol-of-dsquare}
	Let $D$ be the Dirac operator on the spinor bundle $S\to M$, then Lichnerowicz formula shows that $$D^2 = -\sum_i \nabla_{e_i}^2 +s/4$$ where $s$ is the scalar curvature. Let $\mathbb{D}: C^\infty(\mathbb{T}M ,\mathbb{S})\to C^\infty(M\times \mathbb{R})$ be the $r$-fibered distribution given by the Schwartz kernel of $t^2D$ on each $r$-fiber, namely
	\[
	\left\langle\mathbb{D}, f \right\rangle(x,t) = t^2Df(x,x,t)
	\]
	where on the right hand side the Dirac operator is acting on the second variable of $f$. Similarly let $\mathbb{D}^2: C^\infty(\mathbb{T}M ,\mathbb{S})\to C^\infty(M\times \mathbb{R})$ be the $r$-fibered distribution given by
	\[
	\left\langle\mathbb{D}^2, f \right\rangle(x,t) = t^2D^2f(x,x,t)
	\]
	where the square of Dirac operator $D^2$ is again, acting on the second variable of $f$.
	The full symbol of $\mathbb{D}^2$ is given by $a(x,\xi,t) = -|\xi|^2+s/4\cdot t^2$.
\end{example}

There is also an inverse to the symbol map $
Q: \operatorname{Taylor-Symb}^m\to \psidom/A
$
which we shall describe now.
By definition, a  symbol $a(x,\xi,t)\in \operatorname{Symb}^m$ is given by the Fourier transformation of $\widetilde{\mathbb{P}}$. Applying the inverse Fourier transformation, we have
\begin{equation}\label{eq-recover-tilde-P}
	\widetilde{\mathbb{P}}(x,Y,t) = (2\pi)^{-n}\int_{T^\ast_x M} a(x,\xi,t) e^{iY\cdot \xi}  d\xi,
\end{equation}
applying the map $S$ on both sides, the equation becomes
\begin{equation}\label{eq-recover-big-p}
	\mathbb{P}(x,y,t) = (2\pi)^{-n}  \tau_2(x,y)\int_{T^\ast_x M} q_t\left( a(x,\xi,t)\right) e^{-i\xi \cdot \exp_x^{-1}(y)/t} d\xi,
\end{equation}
here $\tau_2(x,y)$ means the parallel translation in the second variable of $S\boxtimes S^\ast$ from the point $x$ to $y$. Let $\overline{u}(y)$ be the parallel translation of $u(y)$ to $S_x$. As $\mathbb{P}(x,y,1)$ is the Schwartz kernel of the properly supported pseudodifferential operator $P: C^\infty_c(M,S)\to C^\infty_c(M,S)$, we have
\begin{equation}\label{eq-recover-operator-from-symbol}
	Pu(x) = (2\pi)^{-n} \int q(a(x,\xi)) e^{-i\xi \cdot \exp_x^{-1}(y)} \overline{u}(y) d\xi dy
\end{equation}
which recover the formula of \cite[Equation~2.15]{BlockFox90}.

\begin{definition}\label{def-quantization-map}
	The quantization map 
	\[
	Q: \operatorname{Taylor-Symb}^\ast\to \Psi^\ast(\mathbb{T}M, \mathbb{S})/A
	\]
	sends the class determine by $a(x,\xi,t)$ to the class determine by the equation~\eqref{eq-recover-big-p}. By construction the quantization map is inverse to the symbol map.
\end{definition}





The space $\operatorname{Taylor-Symb}^\ast$ inherit an algebra structure from that of $\Psi^\ast(\mathbb{T}M, \mathbb{S})/A$. Its multiplication formula is particular useful and we shall now describe. Let $a,b$ be two symbols with Taylor's expansion 
$$
a\sim \sum t^{k}\cdot a_k(x,\xi)
$$ 
and 
$$
b\sim \sum t^{k}\cdot b_k(x,\xi),
$$ 
their multiplication in $\operatorname{Taylor-Symb}^m$ is given by the Taylor's expansion
\begin{equation}\label{eq-conv-formula-asymptotic}
a_0\#_0b_0+t\left(a_0\#_0b_1+a_1\#_0b_0\right)+t^2 \left(a_0\#_0b_2+a_1\#_0b_1+a_2\#_0b_2\right)+\cdots
\end{equation}
where $\#_0$ is the multiplication given in Proposition~\ref{prop-multiplication-formula-at-zero-fourier}. With this multiplication formula, the quantization map and the full symbol map are algebra homomorphisms.

If $\mathbb{P}\in \psidom$ with $m$ less than $-n$, then $\mathbb{P}$ is of trace class. Its supertrace can be calculated as
\[
\operatorname{Str}(\mathbb{P}) =(2\pi)^{-n}\left(\frac{2}{i}\right)^{n/2} \int_{T^\ast M} a(x,\xi,t)dx.
\]
Here the full symbol $a(x,\xi,t)$ take value in $\wedge^\ast T^\ast_xM$, together with $dx$, the integrand can be integrated against cotangent bundle.
Under the light of Taylor's expansion, the supertrace can be expanded as
\[
\operatorname{Str}(\mathbb{P})\sim t^k \frac{(2\pi)^{-n}}{k!}\left(\frac{2}{i}\right)^{n/2} \int_{T^\ast M} \partial_t^k a(x,\xi,0)dx.
\]

\subsection{Extended symbol space}
As mentioned in the Introduction, the symbol space $\operatorname{Symb}^m$ is restrictive in two ways, in this subsection we shall extend the symbol space as well as the quantization map and the full symbol map. 

\begin{definition}
	Let $S^m\subset C^\infty(T^\ast M\times \mathbb{R}; \rho^\ast \wedge T^\ast M)$ be the subspace whose element satisfies the symbol estimate \eqref{eq-classical-symbol-estimate}.
\end{definition}

Let $a\in S^m$, its Taylor's expansion is given by
\begin{equation}\label{eq-sym-asy}
a(x,\xi,t)\sim a(x,\xi,0)+t\partial_t a(x,\xi,0)+ \frac{t^2}{2}\partial_t^2 a(x,\xi,0)+\cdots 
\end{equation}
where the coefficient of $t^n/n!$ satisfy the relation $|\partial_x^\alpha \partial_\xi^\beta b(x,\xi)|=\mathcal{O}\left((1+|\xi|)^{m-n-|\beta|}\right)$ for all $\alpha, \beta$. Denote by $\operatorname{Taylor-S}^m$ the space of Taylor's expansion of symbols in $S^m$, the quantization map given in Definition~\ref{def-quantization-map} can be extended to  $\operatorname{Taylor-S}^m$ by using the formula \eqref{eq-recover-big-p} for each $a\in S^m$.

\begin{definition}
	Let $a\in S^m$, $\varphi$ be a cut-off function   on the tangent groupoid as in Definition~\ref{def-cut-off} and $\mathbb{Q}_\varphi(a)\in \mathcal{D}^\prime_r(\mathbb{T}M, \mathbb{S})$ be the following distribution
	\[
	\mathbb{Q}_\varphi(a) (x,y,t)= (2\pi)^{-n} \varphi(x,y,t) \tau_2(x,y)\int_{T^\ast_x M} q_t(a(x,\xi,t))e^{-i\xi\cdot \exp_x^{-1}(y)/t} d\xi.
	\]
	Let $\operatorname{Q}(S^m)$ be the set of all $\mathbb{Q}_\varphi(a)$ as $\varphi$ varies over all cut-off functions and $a$ ranges over all $S^m$. 
\end{definition}

It is easy to check that the Taylor's expansion of $\mathbb{Q}_\varphi(a)$ is given by
\begin{equation}\label{eq-distri-asy}
b_0+tb_1+\frac{t^2}{2}b_2+\cdots
\end{equation}
where $b_i\in \mathcal{D}_\pi^\prime(TM, \pi^\ast \wedge T^\ast M)$ is a fibered distribution along the bundle projection $\pi: TM\to M$ given by the oscillatory integral 
\[
b_i(x,Y) = \int \partial_t^ka(x,\xi,0) e^{iY\cdot \xi} d\xi.
\]
Let $\operatorname{Taylor-Q(S^m)}$ be the space of Taylor's expansion of elements in $Q(S^m)$.
\begin{definition}
	The extended symbol map 
	\[
	\Sigma: \operatorname{Taylor-Q(S^m)}\to \operatorname{Taylor-S^m}
	\]
	is given by sending \eqref{eq-distri-asy} to \eqref{eq-sym-asy}.
\end{definition}
It is not clear, in our context, if the space $Q(S^m)$ has an algebra structure, however, as $b_i$'s are Schwartz kernel of classical pseudodifferential operators on tangent space, the algebra structure of $\operatorname{Taylor-Q(S^m)}$ is clear. Let $\varphi, \varphi^\prime$ be two cut-off functions, the difference $\mathbb{Q}_\varphi(a)-\mathbb{Q}_{\varphi^\prime}(a)= \mathbb{Q}_{\varphi-\varphi^\prime}(a)$ has zero Taylor's expansion at $t=0$. 
\begin{definition}
The extended quantization map is a map $$Q: \operatorname{Taylor-S}^\ast \to \operatorname{Taylor-Q(S^\ast)}$$ that sends the Taylor's expansion of any $a\in S^m$ to the Taylor's expansion of $\mathbb{Q}_\varphi(a)$ for any cut-off function $\varphi$. 
\end{definition}

\begin{proposition}\label{prop-sum-of-symbol}
	Let $a_k\in C^\infty(TM, \pi^\ast \wedge T^\ast M)$ be a sequence of smooth section that satisfy
	\[
	\partial_x^\alpha \partial_\xi^\beta a_k(x,\xi) = \mathcal{O}(1+|\xi|)^{m-k-|\beta|}
	\]
	for all $\alpha, \beta$,
	then
	there exist $a(x,\xi,t)\in S^m$ which has Taylor's expansion $a(x,\xi,t)\sim \sum t^k a_k(x,\xi)$.
\end{proposition}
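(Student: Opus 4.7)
The plan is to adapt the classical Borel summation trick to the symbol class $S^m$. Fix a cut-off $\chi \in C_c^\infty(\R)$ with $\chi \equiv 1$ on $[-\tfrac12, \tfrac12]$ and $\supp\chi \subset [-1,1]$, and let $\epsilon_k \searrow 0$ be a positive sequence to be determined. Set
$$a(x,\xi,t) := \sum_{k=0}^\infty \chi(t/\epsilon_k)\, t^k\, a_k(x,\xi).$$
Because the $k$-th summand is supported in $|t|\leq\epsilon_k\leq\epsilon_0$, the sum is pointwise finite; because $\chi(t/\epsilon_k)\equiv 1$ for $|t|\leq\epsilon_k/2$, the Taylor expansion at $t=0$ of the $k$-th summand is exactly $t^k a_k(x,\xi)$. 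So once smoothness is established, the formal Taylor expansion of $a$ at $t=0$ is the required $\sum_k t^k a_k(x,\xi)$.

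It remains to pick $\epsilon_k$ so that $a \in S^m$. Leibniz in $t$ gives
$$\partial_t^\gamma\bigl[\chi(t/\epsilon_k)\,t^k\bigr] \;=\; \sum_{\gamma_1+\gamma_2=\gamma}\binom{\gamma}{\gamma_1}\epsilon_k^{-\gamma_1}\chi^{(\gamma_1)}(t/\epsilon_k)\cdot\tfrac{k!}{(k-\gamma_2)!}\,t^{k-\gamma_2}$$
(the summand vanishes when $\gamma_2>k$). On the support one has $|t|\leq\epsilon_k$, so $|t|^{k-\gamma_2}\leq\epsilon_k^{k-\gamma_2}$ when $\gamma_2\leq k$, and together with the hypothesis on $a_k$ this produces the pointwise bound
$$\bigl|\partial_x^\alpha\partial_\xi^\beta\partial_t^\gamma[\chi(t/\epsilon_k)\,t^k\,a_k]\bigr| \;\leq\; C(k,\alpha,\beta,\gamma)\,\epsilon_k^{k-\gamma}\,(1+|\xi|)^{m-k-|\beta|}$$
for $k\geq\gamma$, where $C(k,\alpha,\beta,\gamma)$ gathers the combinatorial factors, the sup-norms $\|\chi^{(j)}\|_\infty$, and the symbol constant of $a_k$ at order $(\alpha,\beta)$.

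Now I would perform the standard diagonal choice: for each $k\geq 1$, pick $\epsilon_k\in(0,1)$ small enough that
$$\epsilon_k\cdot\max\bigl\{\,C(k,\alpha,\beta,\gamma)\;:\;|\alpha|+|\beta|+\gamma\leq k\,\bigr\}\;\leq\;2^{-k}.$$
Fix any multi-index $(\alpha,\beta,\gamma)$. For $k\geq\gamma+1$ with $k\geq|\alpha|+|\beta|+\gamma$, the $k$-th term is bounded by $2^{-k}(1+|\xi|)^{m-k-|\beta|}\leq 2^{-k}(1+|\xi|)^{m-|\beta|}$, so the tail sums to at most $C(1+|\xi|)^{m-|\beta|}$, and the finitely many head terms contribute another bounded multiple of $(1+|\xi|)^{m-|\beta|}$. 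Since $a$ vanishes for $|t|>\epsilon_0$, on the support one has $(1+|\xi|)^{m-|\beta|}\leq(1+\epsilon_0)^{|m-|\beta||}(1+|\xi|+t)^{m-|\beta|}$, which yields the symbol estimate \eqref{eq-classical-symbol-estimate}. The same argument applied to partial sums shows uniform convergence of all derivatives on compact sets, so $a$ is smooth.

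The only real technical hurdle is the bookkeeping of this diagonal choice, which must work uniformly in all $(\alpha,\beta,\gamma)$ with $|\alpha|+|\beta|+\gamma\leq k$; conceptually this is nothing more than H\"ormander's classical proof of Borel's lemma for symbols. The vector-bundle coefficient $\pi^\ast\wedge T^\ast M$ enters only through a fixed fiberwise norm and presents no additional obstacle.
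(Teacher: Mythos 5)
Your proof is correct, and it is a leaner implementation of the Borel-summation idea than what the paper (following Block--Fox, Lemma~3.11) actually does. The paper's cut-off is $\phi_k\bigl((t|\xi|+t)^{-2}\bigr)=\phi\bigl(\varepsilon_k\,(t(|\xi|+1))^{-2}\bigr)$, which couples $t$ with $\xi$: the $k$-th summand is supported where $t(|\xi|+1)\lesssim\sqrt{\varepsilon_k}$, and when one differentiates, the $\xi$-derivatives falling on the cut-off must themselves be shown to produce the correct $(1+|\xi|)^{-1}$ gain per derivative. Your cut-off $\chi(t/\epsilon_k)$ depends on $t$ alone, so $\xi$-derivatives pass straight through to $a_k$, where the hypothesis already supplies the decay; the only new factor is $\epsilon_k^{-\gamma_1}$ from $t$-derivatives of $\chi$, which your bound $\epsilon_k^{k-\gamma}$ absorbs for $k>\gamma$, and the finitely many remaining terms are fixed smooth symbols. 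Both constructions produce a symbol supported in $|t|\le\epsilon_0$ (the paper's is too, once one unpacks the cut-off), so $(1+|\xi|+t)$ and $(1+|\xi|)$ are comparable on the support and the estimate \eqref{eq-classical-symbol-estimate} follows in either case; and both yield $\partial_t^\gamma a(x,\xi,0)=\gamma!\,a_\gamma(x,\xi)$ because the cut-off is identically $1$ near $t=0$. In short, your $t$-only cut-off suffices for the statement actually being proved; the paper's $\xi$-dependent cut-off buys nothing here and is simply inherited from the Block--Fox setup, where the parameter $t(|\xi|+1)$ is intrinsic to the symbol class. One small bookkeeping point worth making explicit: the uniform-on-compacta convergence of all $(\alpha,\beta,\gamma)$-derivatives of the partial sums is what licenses term-by-term differentiation at $t=0$, so the smoothness argument and the identification of the Taylor coefficients are really the same estimate, as you note at the end.
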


\begin{proof}
	(see \cite[Lemma~3.11]{BlockFox90}.) Choose $\phi\in C^\infty(\mathbb{R})$ such that $\phi(x)=0$ if $x\leq 1$ and $\phi(x)=1$ if $x\geq 2$. Let 
	$
	C_k = \max_{|\alpha|,|\beta|\leq k}\sup \left|(1+|\xi|)^{|\beta|-m}\partial_x^\alpha\partial_\xi^\beta a_k(x,\xi)\right|
	$
	and choose a decreasing sequence of positive numbers $\varepsilon_k$ that converges to zero and satisfy
	\[
	\varepsilon_k\leq\left( k!\cdot 2^i\cdot 
C_k\right)^{-2}.
	\]
	Write $\phi_k(x)$ for $\phi(\varepsilon_kx)$ and set
	\begin{equation}\label{eq-add-symbol-to-asym-expansion}
	a(x,\xi,t) = \sum_{k=0}^\infty t^k \phi_k((t|\xi|+t)^{-2})a_k(x,\xi).
	\end{equation}
	Notice that for any nonzero $t$ there is a $N\in \mathbb{N}$ such that  $\varepsilon_k(t|\xi|+t)^{-2}< 1$ for all $k>N$. Therefore, the summation \eqref{eq-add-symbol-to-asym-expansion} is locally finite and well-defined. As for the smoothness at $t=0$, we observe that the differential of \eqref{eq-add-symbol-to-asym-expansion} at any $(x,\xi,t)\in T^\ast M\times \mathbb{R}^\ast$ is given by
	\begin{equation}\label{eq-differential-symbol-estiamte}
	\partial_x^\alpha\partial_\xi^\beta\partial_t^\gamma
	a(x,\xi,t)
	=
	\sum_{k=0}^{\infty} C_{\gamma_1\gamma_2}C_{\beta_1\beta_2}\cdot k!\cdot t^{k-|\gamma_1|} \partial_\xi^{\beta_2}\partial_t^{\gamma_2} \left( \phi_k((t|\xi|+t)^{-2})\right) \partial_x^\alpha \partial_\xi^{\beta_1} a_k(x,\xi).
	\end{equation}
The sum $\sum_{k=0}^\infty$ can be split into two parts $\sum_{k\leq |\alpha|+|\beta|+|\gamma|+1}$ and $\sum_{k\geq |\alpha|+|\beta|+|\gamma|+2}$ where the first part is finite and clearly continuous at $t=0$. 

For the second part we may assume that $\varepsilon_k(t|\xi|+t)^{-2}\geq 1$, or equivalently, $t\leq \sqrt{\varepsilon_k}(|\xi|+1)^{-1}\leq \sqrt{\varepsilon_k}$, and
the factor $t^{|\gamma_2|}\partial_\xi^{\beta_2}\partial_t^{\gamma_2} \left( \phi_k((t|\xi|+t)^{-2})\right)$ is controlled by $(1+|\xi|)^{-|\beta_2|}$ times a constant independent of $t, k$ and $|\xi|$.  So, the absolute value of each term in the sum $\sum_{k\geq |\alpha|+|\beta|+|\gamma|+2}$ is controlled by a constant that only depends on $\alpha, \beta,\gamma$ times
$
 k!t^{k-|\gamma|} C_k \cdot (1+|\xi|)^{m-|\beta|-k}.
$
And the choice of $\varepsilon_k$ ensures that  each summand in the second part is controlled by $t\cdot 2^{-k}\cdot (1+|\xi|)^{m-\beta|-k}$ This proves the smoothness as well as the symbol estimate \eqref{eq-classical-symbol-estimate}.

Now, we shall verify the Taylor's expansion condition. Indeed,  since $\phi_k((t|\xi|+t)^{-2})-1$ vanishes to infinite order at $t=0$, by setting $\alpha,\beta=0$ in \eqref{eq-differential-symbol-estiamte}, we have $\partial_t^\gamma a(x,\xi,0)=a_\gamma(x,\xi)$. This completes the proof.

\end{proof}

The space $\operatorname{Taylor-S}^\ast$ can be given a multiplication formula by the formula~\eqref{eq-conv-formula-asymptotic}. In fact, this multiplication formula turn  $\operatorname{Taylor-S}^\ast$ into a graded algebra. Indeed, let $a, b\in  C^\infty(TM, \pi^\ast \wedge T^\ast M)$ satisfy $\partial_x^\alpha \partial_\xi^\beta a(x,\xi) = \mathcal{O}(1+|\xi|)^{m-|\beta|}$ and $\partial_x^\alpha \partial_\xi^\beta b(x,\xi) = \mathcal{O}(1+|\xi|)^{n-|\beta|}$ for all $\alpha, \beta$. Then 
\[
a\#_0 b(x,\xi) =\exp\left(-\frac{1}{2}\kappa(\partial_\xi,\partial_\eta)\right) a(x,\xi) b(x,\eta)|_{\xi=\eta}
\]
can be expanded to a finite sum where each term is some derivatives with respect to $\xi$ or $\eta$. Its highest order term is $a(x,\xi)b(x,\xi)$ which satisfy 
$$
\partial_x^\alpha \partial_\xi^\beta \left(a(x,\xi)b(x,\xi)\right) = \mathcal{O}(1+|\xi|)^{m+n-|\beta|}.
$$
Therefore according to the Proposition~\ref{prop-sum-of-symbol}, the multiplication of $a\in \operatorname{Taylor-S}^m$ and $b\in \operatorname{Taylor-S}^n$ belongs to $\operatorname{Taylor-S}^{m+n}$.  

\begin{proposition}\label{prop-main-tool}
	The extended full symbol map is an algebra isomorphism whose inverse is given by the extended quantization map.
\end{proposition}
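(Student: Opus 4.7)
The plan is to prove the bijection and the algebra homomorphism property separately, both reducing to statements at the level of individual Taylor coefficients where Fourier inversion and the composition formula from Proposition~\ref{prop-multiplication-formula-at-zero-fourier} carry the argument.

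First I would check that $\Sigma\circ Q$ and $Q\circ \Sigma$ are the identity. Given $a\in S^m$ and a cut-off function $\varphi$, the distribution $\mathbb{Q}_\varphi(a)$ defined by the oscillatory integral \eqref{eq-recover-big-p} is smooth in a neighborhood of $t=0$ inside $\supp \varphi$, so the Taylor coefficients of $\mathbb{Q}_\varphi(a)$ at $t=0$ are computed by differentiating under the integral sign and give precisely the fibered distributions $b_i$ of \eqref{eq-distri-asy} satisfying $\widehat{b_i}(x,\xi)=\partial_t^i a(x,\xi,0)$. Applying $\Sigma$ then recovers the Taylor expansion \eqref{eq-sym-asy} of $a$. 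Conversely, starting from a formal Taylor series $\sum t^k a_k$ whose coefficients satisfy the symbol estimates, Proposition~\ref{prop-sum-of-symbol} produces an actual $a\in S^m$ realising that expansion, and the previous computation shows that $\Sigma(Q(\sum t^k a_k))$ gives the same class. Independence of $Q$ from the choice of cut-off is already observed in the excerpt, since $\mathbb{Q}_\varphi(a)-\mathbb{Q}_{\varphi^\prime}(a)=\mathbb{Q}_{\varphi-\varphi^\prime}(a)$ vanishes to infinite order at $t=0$.

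Second, for the algebra property, I would show that $\Sigma$ intertwines the convolution multiplication on $\operatorname{Taylor-Q(S^\ast)}$ with the $\#_0$-driven multiplication \eqref{eq-conv-formula-asymptotic} on $\operatorname{Taylor-S}^\ast$. For properly chosen cut-offs $\varphi,\varphi^\prime$, Corollary~\ref{coro-conv-dis-d-e} ensures that $\mathbb{Q}_\varphi(a)\ast \mathbb{Q}_{\varphi^\prime}(b)$ is a well-defined element of $\mathcal{D}^\prime_r(\mathbb{T}M,\mathbb{S})$, and the restrictions at $t=0$ are tempered fibered distributions on $TM$, one of them properly supported. Corollary~\ref{coro-fourier} then identifies the Fourier transform of the restriction $(\mathbb{Q}_\varphi(a)\ast \mathbb{Q}_{\varphi^\prime}(b))|_{t=0}$ with $a(x,\xi,0)\#_0 b(x,\xi,0)$. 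Differentiating in $t$ and applying the Leibniz rule, legitimate since $r$-fibered convolution is $C^\infty(M\times\mathbb{R})$-bilinear, produces coefficient by coefficient precisely the terms appearing in \eqref{eq-conv-formula-asymptotic}. This exhibits $\Sigma$ as an algebra homomorphism, and combined with the bijection just established shows that $Q$ is its algebra inverse.

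The main obstacle is justifying the interchange of $t$-differentiation, restriction to $t=0$, and convolution for the extended class of distributions in $Q(S^m)$, which are not themselves compactly supported along the $r$-fibres. The cut-off function is indispensable: by choosing $\varphi$ supported in $\mathbb{U}$ and equal to $1$ near $\mathbb{T}M|_{t=0}\cup \mathbb{T}M^{(0)}$, one factor in the convolution becomes properly supported over a neighborhood of $t=0$, so Corollaries~\ref{coro-conv-dis-d-e} and \ref{coro-fourier} apply and the Taylor coefficients at $t=0$ are Schwartz kernels of classical scalar pseudodifferential operators on the tangent spaces, for which $\#_0$ is the honest composition rule. Once this is pinned down the remainder is routine Fourier analysis underwritten by Proposition~\ref{prop-sum-of-symbol} and the estimates \eqref{eq-classical-symbol-estimate}.
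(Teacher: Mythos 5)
Your proposal is correct and follows essentially the same route as the paper's proof: the key step in both is applying Corollary~\ref{coro-fourier} term by term to the Leibniz-rule expansion of $\mathbb{P}_k\ast\mathbb{Q}_l$ and matching it against the $\#_0$-product formula~\eqref{eq-conv-formula-asymptotic}. You are somewhat more thorough than the paper, which treats the bijectivity of $\Sigma$ and $Q$ as immediate from their construction and takes the Leibniz-rule expansion as the definitional multiplication on $\operatorname{Taylor-Q(S^\ast)}$ (since $Q(S^\ast)$ itself is not claimed to be an algebra), whereas you derive it from the $t$-smoothness of the cut-off convolution and flag the needed interchange of $\partial_t$, restriction to $t=0$, and convolution.
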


\begin{proof}
	Let $\sum t^k\mathbb{P}_k$ and $\sum t^k\mathbb{Q}_k$ be two Taylor's expansions in $\operatorname{Taylor-Q(S^\ast)}$ whose extended full symbol are $\sum t^k a_k$ and $\sum t^k b_k$ respectively, their multiplication is given by
	\[
	\mathbb{P}_0\ast \mathbb{Q}_0+t\left(\mathbb{P}_1\ast \mathbb{Q}_0+ \mathbb{P}_0\ast \mathbb{Q}_1\right)+\frac{t^2}{2}\left(\mathbb{P}_2\ast \mathbb{Q}_0+2\mathbb{P}_1\ast \mathbb{Q}_1+\mathbb{P}_0\ast \mathbb{Q}_2\right)+\cdots
	\]
	whose Fourier transformation according to Corollary~\ref{coro-fourier} is given by
	\begin{equation}\label{eq-expansion-product}
		a_0\#_0 b_0+ t(a_1 \#_0 b_0+ a_0\#_0 b_1)+\frac{t^2}{2}(a_2\#_0b_0+ 2a_1\#_0 b_1+ a_0\#_0 b_2)+\cdots
	\end{equation}
which coincide with the algebra structure of $\operatorname{Taylor-S^\ast}$.
\end{proof}

\section{Index theory}\label{sec-index}

Consider the differential equation 
\begin{equation}\label{eq-heat-groupoid}
\frac{\partial f}{\partial\tau} + \mathbb{D}^2 f = 0.
\end{equation}
When restricting on each $t\neq 0$ fiber on the tangent groupoid, the fundamental solution, namely the solution with initial condition $f|_{\tau=0}=\delta$, is the heat kernel $e^{-\tau t^2D^2}(x,y)\in C^\infty(M\times M, S\boxtimes S^\ast)$. An important problem is that for fixed $\tau \neq 0$ whether this solution extend smoothly to $t=0$ as a smooth section of the rescaled bundle.

Assume that the solution $f$ belongs to $Q(S^\ast)$, and whose full symbol has Taylor's expansion, 
$$
a^\tau_0(x,\xi)+\frac{t^2}{2}a^\tau_2(x,\xi)+\frac{t^4}{24}a^\tau_4(x,\xi)+\cdots,
$$
then, according to the Proposition~\ref{prop-main-tool}, the full symbol of $\mathbb{D}^2 f$ has asymptotic expansion
\[
-|\xi|^2\#_0 a^\tau_0(x,\xi)+\frac{t^2}{2}\left(-|\xi|^2\#_0 a^\tau_2(x,\xi)+\frac{s}{2}a^\tau_0(x,\xi)\right)+\cdots
\]
where the convolution $\#_0$ is given by the formula \eqref{eq-multiplication-formula-at-zero-fourier}. As $|\xi|^2$ is a polynomial of order $2$,
the convolution $-|\xi|^2\#_0 a^\tau_{2k}$ has only three terms and can be explicitly calculated as
\begin{multline*}
-|\xi|^2\#_0 a_{2k}^\tau(x,\xi) = -|\xi|^2a_{2k}^\tau(x,\xi)+\kappa(e_i,e_j)\xi_i\partial_{\xi_j}a_{2k}^\tau(x,\xi)
\\
+\frac{1}{4}\kappa(e_i,e_p)\kappa(e_p,e_j) \partial_{\xi_i} \partial_{\xi_j} a_{2k}^\tau(x,\xi).
\end{multline*}
At the level of full symbols, the differential equation~\eqref{eq-heat-groupoid} becomes
\begin{multline*}
	\frac{\partial a_{2k}^\tau}{\partial \tau}(x,\xi)+\left(|\xi|^2-\kappa(\xi, \partial_\xi)-\frac{1}{4}\kappa\wedge \kappa(\partial_\xi, \partial_\xi)\right) a_{2k}^\tau(x,\xi)
	\\
	+ s\left(k-1\right)\left(k-\frac{1}{2}\right)a^\tau_{2k-2}(x,\xi)=0.
\end{multline*}
with initial condition $a^\tau_{2k}|_{\tau=0}=0$ for all $k\geq 1$ and
\begin{equation}\label{eq-heat-leading}
\frac{\partial a_0^\tau}{\partial \tau}(x,\xi)+\left(|\xi|^2-\kappa(\xi, \partial_\xi)-\frac{1}{4}\kappa\wedge \kappa(\partial_\xi, \partial_\xi)\right) a_0^\tau(x,\xi)=0,
\end{equation}
with initial condition $a_0^\tau|_{\tau=0}=1$.
The fundamental solution $A^\tau(\xi,\eta)$ of \eqref{eq-heat-leading} is given by the Mehler's formula which is Schwartz in $\xi$ and one can solve $a_{2k}^\tau(x,\xi)$ recursively by the formula
\[
a_{2k}^\tau(x,\xi) = -s\left(k-1\right)\left(k-\frac{1}{2}\right)\int_0^\tau A^{\tau-\tau^\prime}(\xi,\eta) a^{\tau^\prime}_{2k-2}(x,\eta)d\tau^\prime d\eta
\]
which is also Schwartz in $\xi$.

According to Proposition~\ref{prop-sum-of-symbol}, there is a full symbol $a(x,\xi,t)$ which has asymptotic expansion 
\begin{equation}\label{eq-asym-heat}
a^\tau_0(x,\xi)+\frac{t^2}{2}a^\tau_2(x,\xi)+\frac{t^4}{24}a^\tau_4(x,\xi)+\cdots.
\end{equation}
It is easy to see that the full symbol $a(x,\xi,t)$ is an approximate solution to the equation~\eqref{eq-heat-groupoid}, namely
\[
\frac{\partial a}{\partial \tau}(x,\xi,t)-\left(-|\xi|^2+\frac{s}{4}t^2\right)\ast_0 a(x,\xi,t) 
\]
vanishes to infinite order at $t=0$. By passing to quantization, we get an approximate solution $f^\prime$ to \eqref{eq-heat-groupoid}.

\begin{proposition}
	Let $$g=\frac{\partial f^\prime}{\partial \tau}+\mathbb{D}^2f^\prime$$ be the smooth section of the rescaled bundle that vanishes to infinite order at $t=0$, then the differential equation
	\[
	\frac{\partial f}{\partial\tau} + \mathbb{D}^2 f = g
	\]
	with the initial condition $f|_{\tau=0}=0$ has solution belongs to $C^\infty(\mathbb{T}M, \mathbb{S})$.
\end{proposition}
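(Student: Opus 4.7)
The plan is to construct $f$ fiberwise by Duhamel's principle on $\{t \neq 0\}$ and then show that it extends smoothly across $t = 0$ by zero, exploiting the hypothesis that $g$ vanishes to infinite order there.

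First, for each fixed $t \neq 0$, $\mathbb{D}^2$ acts on the $r$-fiber as $t^2 D^2$, a non-negative self-adjoint operator on $L^2(M, S)$ generating the contraction semigroup $e^{-\sigma t^2 D^2}$. Duhamel's formula then gives the unique solution
\[
f(\tau, t) = \int_0^\tau e^{-(\tau-\sigma)\, t^2 D^2}\, g(\sigma, t)\, d\sigma,
\]
which is smooth in $(x, y, t, \tau)$ on the region $t \neq 0$ by standard parabolic regularity and satisfies $f|_{\tau=0}=0$. The only substantive issue is to show that $f$ extends to a smooth section of $\mathbb{S}$ across $t = 0$.

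Second, since $g$ vanishes to infinite order at $t = 0$, for every $N \in \mathbb{N}$ we may write $g = t^{2N} h_N$ with $h_N \in C^\infty(\mathbb{T}M, \mathbb{S})$ still vanishing to infinite order at $t = 0$. Substituting $v = \tau - \sigma$ gives
\[
f(\tau, t) = t^{2N} \int_0^\tau e^{-v\, t^2 D^2}\, h_N(\tau - v, t)\, dv,
\]
and the contraction bound $\|e^{-v t^2 D^2}\|_{L^2 \to L^2} \leq 1$ yields $\|f(\tau, t)\|_{L^2} \leq C_N\, t^{2N}$. To upgrade this $L^2$ decay to pointwise smooth decay, I control derivatives under the integral by the standard off-diagonal Gaussian bounds for the heat kernel $K_s$ of $D^2$,
\[
\bigl|\partial_x^\alpha \partial_y^\beta K_s(x, y)\bigr| \leq C_{\alpha, \beta}\, s^{-(n + |\alpha| + |\beta|)/2}\, \exp\!\left(-\tfrac{d(x,y)^2}{C s}\right),
\]
with $s = v t^2$; the $\tau$-derivatives are handled inductively from $\partial_\tau f = g - \mathbb{D}^2 f$, and $t$-derivatives produce further positive powers of $t$ through $\mathbb{D}^2 = t^2 D^2$. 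Each derivative of $f$ costs only finitely many negative powers of $t$, which are absorbed into $t^{2N}$ by choosing $N$ large enough.

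Third, step two shows that $f$ and all its derivatives vanish to infinite order at $t = 0$ in the spinor-kernel representation. Applying the local trivialization $T \circ \Phi^\ast$ of Proposition~\ref{prop-trivialization-of-local-rescaled-bundle}, which introduces at most finitely many negative powers of $t$, the pullback $T(\Phi^\ast f)$ extends smoothly to all of $\widetilde{\mathbb{U}}$ with value the zero section at $t = 0$. Transporting back, $f \in C^\infty(\mathbb{T}M, \mathbb{S})$ as required, and at $t = 0$ this extension consistently solves the limiting equation $\partial_\tau f + \mathbb{D}^2 f = 0$ (with $g|_{t=0}=0$) in the zero-section. The main technical obstacle is the derivative bookkeeping in step two: the heat-kernel derivatives blow up as $t \to 0$, and one must verify that the infinite vanishing of $g$ always dominates, uniformly in $v \in [0, \tau]$ and in the order of derivative. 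Once the Gaussian estimates above are in place this is routine, but it is the only nontrivial ingredient of the proof.
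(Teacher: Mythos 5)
Your proposal follows the same route as the paper: Duhamel's formula for $t\neq 0$, infinite-order vanishing of $f$ as $t\to 0$, and then smooth extension across $t=0$ by absorbing the finitely many negative powers of $t$ coming from the rescaling. The only cosmetic difference is that you pass through the trivialization $T\circ\Phi^\ast$ of Proposition~\ref{prop-trivialization-of-local-rescaled-bundle} while the paper instead factors $t^n$ out of the scalar coefficients and invokes the algebraic module $S(\mathbb{T}M)$ of Definition~\ref{def-module}; these are two presentations of the same absorption argument, and you supply more detail than the paper on why $f$ and its derivatives actually vanish to infinite order.
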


\begin{proof}
	For any nonzero $t$, the equation can be solved by the formula
	\[
	f(\tau)(x,y,t) = \int_0^\tau e^{-t^2(\tau-\tau^\prime)D^2}(x,y) g(\tau^\prime)(x,y,t) d\tau^\prime.
	\]
	Then as $g$ vanishes to infinite order at $t=0$, the section $f(\tau)\in C^\infty(M\times M\times \mathbb{R}^\ast; S^\ast \boxtimes S)$ vanishes to infinite order as $t\to 0$. The section $f(\tau)$ can be locally written as a finite sum
	\[
	f(\tau)(x,y,t) = \sum t^nf_i(x,y,t) s_i(x,y)
	\]
	where $f_i$ are smooth functions on $M\times M\times \mathbb{R}^\ast$ that vanishes as $t\to 0$, $s_i$ are sections of $S^\ast \boxtimes S\to M\times M$ and $n$ is the dimension of $M$. Then each $f_i$ can be extended to a smooth function on the tangent groupoid and $s_i(x,y)t^n$ is an element of the module $S(\mathbb{T}M)$ defined in Definition~\ref{def-module} and is a smooth section of the rescaled bundle. Overall, $f(\tau)$ can be extended to the smooth section of the rescaled bundle $\mathbb{S}$.
\end{proof}

We see that $f-f^\prime$ is a genuine solution to the heat equation \eqref{eq-heat-groupoid} and it has the same Taylor's expansion as the approximate solution $f^\prime$. 
Therefore, the image of \eqref{eq-asym-heat} under the extended quantization map is precisely the asymptotic expansion of the heat kernel.
 
\bibliography{Refs} 
\bibliographystyle{alpha}

\noindent{\small School of Mathematical Sciences, Fudan University, Shanghai, 200433, China}

\smallskip

\noindent{\small Email: xchen@fudan.edu.cn
}
\smallskip

\noindent {\small  School of Mathematical Sciences, Tongji University, Shanghai, 200092, China}

\smallskip

\noindent{\small Email:  zelin@tongji.edu.cn.}

\end{document}